\numberwithin{equation}{section}
\newcommand{\comm}[1]{}
\newtheorem{theorem}{Theorem}
\newtheorem{definition}[theorem]{Definition}
\newtheorem{lemma}[theorem]{Lemma}
\newtheorem{question}[theorem]{Question}
\newtheorem{remark}[theorem]{Remark}
\newtheorem{proposition}[theorem]{Proposition}
\newtheorem{corollary}[theorem]{Corollary}
\numberwithin{theorem}{section}
\theoremstyle{remark}
\DeclareMathOperator{\GL}{GL}
\newcommand{\ZZ}{{\mathbb{Z}}}
\newcommand{\NN}{{\mathbb{N}}}
\newcommand{\interior}[1]{%
  {\kern0pt#1}^{\mathrm{o}}%
}
 \newcounter{case}
 \renewcommand{\thecase}{\arabic{case}}
\newcommand{\op}[1]{\operatorname{#1}}
\newcommand{\floor}[1]{\left\lfloor #1 \right\rfloor}
\newcommand{\union}{\cup}
\providecommand{\to}{\longrightarrow }
		\renewcommand{\bf}{\bfseries}
		\renewcommand{\it}{\itshape}
		\renewcommand{\bar}{\overline}
\newcommand{\cout}[1]{}
\begin{document}

\author[Thang Nguyen]{Thang Nguyen}
\author[Shi Wang]{Shi Wang}

\title[Splitting Theorem]{Cheeger-Gromoll Splitting Theorem for groups}

\address{University of Michigan, Ann Arbor, Michigan, US} 
\email{thang.q.nguyen7@gmail.com} 

\address{Max Planck Institute for Mathematics, Bonn, Germany} 
\email{shiwang.math@gmail.com}

\begin{abstract}
We study a notion of curvature for finitely generated groups which serves as a role of Ricci curvature for Riemannian manifolds. We prove an analog of Cheeger-Gromoll splitting theorem. As a consequence, we give a geometric characterization of virtually abelian groups. We also explore the relation between this notion of curvature and the growth of groups.
\end{abstract}

\maketitle


\thispagestyle{empty} 

\section[]{Introduction}

In differential geometry, the notion of Ricci curvature measures the extend to which the volume of infinitesimal conical geodesic ball deviates from that in the Euclidean spaces. Studying the structures of Riemannian manifolds with Ricci curvature bounded from below has a long history and yields many celebrated results. This includes Bishop-Gromov inequality \cite{bishop,Gromov81book}, Gromov's almost flat manifold theorem \cite{almostflat}, Gromov compactness theorem \cite{Gromov81book}, Cheeger-Colding and Cheeger-Naber structure theorems \cite{CheegerColdingI,CheegerNaber}, as well as Cheeger-Gromoll splitting theorem \cite{CG71} which states as follows,

\begin{theorem}
Let $M$ be a complete Riemannian manifold with non-negative Ricci curvature. If $M$ contains a bi-infinite geodesic, then $M$ splits isometrically as $N\times \mathbb R$.
\end{theorem}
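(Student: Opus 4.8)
The plan is to use the Busemann functions associated to the two ends of the line and to exploit the non-negativity of Ricci curvature through the Laplacian comparison theorem. Writing $\gamma:\mathbb{R}\to M$ for the given line, I would split it into the two rays $\gamma_\pm(t)=\gamma(\pm t)$ for $t\ge 0$ and define the associated Busemann functions
$$b_\pm(x)=\lim_{t\to\infty}\bigl(t-d(x,\gamma_\pm(t))\bigr).$$
Each $b_\pm$ is $1$-Lipschitz, and the triangle inequality together with the fact that $\gamma$ is minimizing between any two of its points gives $b_+(x)+b_-(x)\ge 0$ everywhere, with equality along $\gamma$. The whole argument rests on showing that this sum is in fact identically zero and that the resulting function has parallel unit gradient.

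First I would establish that each Busemann function is superharmonic. This is where non-negative Ricci curvature enters: by the Laplacian comparison theorem, the distance functions $r_t(x)=d(x,\gamma_\pm(t))$ satisfy $\Delta r_t\le (n-1)/r_t$ wherever they are smooth, and this bound passes to the limit to yield $\Delta b_\pm\le 0$. Because $b_\pm$ is merely Lipschitz, I would make this precise in the barrier (support-function) sense of Calabi and Eschenburg--Heintze: at each point one constructs a smooth lower support function by moving slightly out along an appropriate asymptotic ray, and the comparison bound holds for these supports. In particular $b_++b_-$ is superharmonic in this support sense.

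Next I would apply the strong maximum principle. Since $b_++b_-$ is superharmonic, nonnegative, and attains its minimum value $0$ along $\gamma$ in the interior, Calabi's maximum principle forces it to vanish identically. Consequently $b_+=-b_-$, which makes $b_+$ simultaneously super- and subharmonic, hence harmonic, and therefore smooth by elliptic regularity. Now I would invoke the Bochner formula
$$\tfrac{1}{2}\Delta|\nabla b_+|^2=|\operatorname{Hess} b_+|^2+\operatorname{Ric}(\nabla b_+,\nabla b_+)+\langle\nabla b_+,\nabla\Delta b_+\rangle.$$
Since $\Delta b_+=0$ and $\operatorname{Ric}\ge 0$, the quantity $|\nabla b_+|^2$ is subharmonic; as it is bounded above by $1$ and attains the value $1$ along $\gamma$, the maximum principle forces $|\nabla b_+|^2\equiv 1$. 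Equality in the Bochner inequality then yields $\operatorname{Hess} b_+\equiv 0$, so $\nabla b_+$ is a parallel unit vector field.

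Finally, the parallel unit field $\nabla b_+$ produces the isometric splitting. Setting $N=b_+^{-1}(0)$, the gradient flow $\Phi:N\times\mathbb{R}\to M$ sending $(x,t)$ to the time-$t$ flow of $\nabla b_+$ starting at $x$ is a diffeomorphism, and parallelism of $\nabla b_+$ makes it an isometry, giving $M\cong N\times\mathbb{R}$. I expect the main obstacle to be the rigorous treatment of the Laplacian comparison and the maximum principle for the only-Lipschitz Busemann functions; carrying this out with smooth lower barriers, rather than assuming regularity one does not have a priori, is the technical heart of the proof.
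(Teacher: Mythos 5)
This statement is the classical Cheeger--Gromoll splitting theorem, which the paper only quotes as background and does not prove (it simply cites \cite{CG71}), so there is no in-paper argument to compare against. Your proposal is the standard modern proof in the Eschenburg--Heintze style, and its structure is correct and complete at the level of a sketch: Busemann functions for the two rays, Laplacian comparison in the barrier sense, Calabi's strong maximum principle to force $b_++b_-\equiv 0$, harmonicity and hence smoothness of $b_+$, the Bochner formula to get a parallel unit gradient, and the flow of that gradient to produce the splitting. You also correctly identify the genuinely delicate point, namely making the comparison and maximum principle work for functions that are a priori only Lipschitz.

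One internal inconsistency you should fix: with your stated convention $b_\pm(x)=\lim_{t\to\infty}\bigl(t-d(x,\gamma_\pm(t))\bigr)$, the triangle inequality $d(x,\gamma(t))+d(x,\gamma(-t))\ge 2t$ gives $b_++b_-\le 0$ (not $\ge 0$), and the Laplacian comparison $\Delta d\le (n-1)/d$ gives $\Delta b_\pm\ge 0$, i.e.\ the Busemann functions are \emph{sub}harmonic (not superharmonic). Your two signs are those of the opposite convention $b=\lim\bigl(d(x,\gamma(t))-t\bigr)$. Since the two slips are consistent with each other, the logical skeleton survives: you apply the strong maximum principle to a subharmonic, nonpositive function attaining its interior maximum $0$ along $\gamma$ (equivalently, the minimum principle in the other convention), and conclude $b_+\equiv -b_-$ exactly as intended. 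So this is a notational error to repair rather than a gap, but as written the inequalities in the second and third paragraphs do not follow from the definition in the first.
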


As a consequence, the fundamental group of a closed Riemannian manifold of non-negative Ricci curvature is virtually abelian. Conversely, Wilking \cite{Wil00} showed that any such group can be realized by the fundamental group of a closed manifold of non-negative Ricci curvature, or even non-negative sectional curvature. The main goal of this paper is to establish similar results for finitely generated groups. Nevertheless, before we can make precise of the statement, we need an appropriate notion of Ricci curvature for groups.

The key observation is the following equivalent characterization of Ricci curvature for Riemannian manifolds (see \cite[Corollary 10]{Oll10}). Let $M$ be an $n$-dimensional smooth manifold with a Riemannian distance $d$, and let $x$ and $ y$ be a pair of points in $M$. We denote $\gamma$ the unit speed geodesic connecting $x$ to $y$. Assume $d(x,y)$ is small, the average distance (with respect to the Lebesgue measure) between points in $S_x(\epsilon)$ to points in $S_y(\epsilon)$ under the identification via the {\it Riemannian parallel transport} along $\gamma$  is given by
$$d(x,y)\bigg(1-\frac{\epsilon^2}{2n}\op{Ric}(\dot{\gamma})+O(\epsilon^3+\epsilon^2d(x,y))\bigg),$$
where $S_x(\epsilon)$ and $S_y(\epsilon)$ are the $\epsilon$-spheres at $x$ and $y$. Thus, a manifold has positive Ricci curvature if on average small balls are closer than their centers are. 

\begin{figure}[ht]
		\centering
		\includegraphics[width=0.7\linewidth]{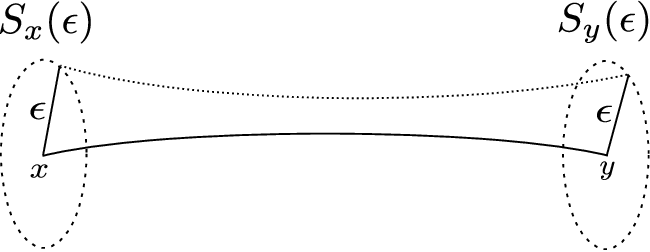}
		\label{fig:Ric-defn-manifold}
\end{figure}

Motivated by this fact, recently, Ollivier \cite{Oll07, Oll09, Oll10, Oll13} introduced the notion of discrete Ricci curvature on graphs, and other more general type of metric spaces. Ollivier's idea is to use the {\it optimal transport} as a replacement for the Riemannian parallel transport. Given any pair of points $x$ and $y$ in the space, up to a normalization, the curvature in direction from $x$ to $y$ is defined to be the difference between the cost to carry the Dirac mass at $x$ to the Dirac mass at $y$ (which is $d(x,y)$) and the \emph{optimal} cost to carry the uniformly distributed probability measure on $S_x(\epsilon)$ to that on $S_y(\epsilon)$. We refer the readers to \cite{Oll09} for a more precise definition in a more general setting.

More recently, based on a similar idea, Bar-Natan--Duchin--Kropholler \cite{NDK} introduced a notion of Ricci curvature for finitely generated groups, by suggesting the {\it left translation} as a parallel transport. Let $\Gamma$ be a group with a finite symmetric generating set $S$, the Cayley graph is a metric space with the left $\Gamma$-invariant word metric. Given any pair of points $x, y\in \Gamma$, we can measure the average distance between the $1$-spheres $S_x(1), S_y(1)$ under the identification via the left translation by $yx^{-1}$, that is,
$$\mathcal{S}_1(x,y)=\frac{1}{|S|}\sum_{s\in S}d(xs,ys).$$
Since the metric is left invariant, it is clear that $\mathcal{S}_1(x,y)=\mathcal{S}_1(1,x^{-1}y)$. Thus, it is sufficient to define the curvature from the identity to any other element in the group. Since heuristically Ricci curvature measures the difference between the distance of the centers $|\gamma|$ and the average distance $\mathcal{S}_1(1,\gamma)$, we can make the following definition.

\begin{definition}
	Let $\Gamma$ be a finitely generated group with symmetric generating set $S$, for every $\gamma\in \Gamma$, we define the \emph{curvature} of $\gamma$ to be,
	$$\kappa_S(\gamma)=\frac{1}{|S|}\sum_{s\in S}(d(1,\gamma)-d(s,\gamma s))=\frac{1}{|S|}\sum_{s\in S}(|\gamma|-|s\gamma s^{-1}|),$$
	where $|S|$ is the carnality of $S$, and $|\gamma|$ is the distance from $\gamma$ to the identity under the word metric.
\end{definition}

\begin{figure}[ht]
		\centering
		\includegraphics[width=0.7\linewidth]{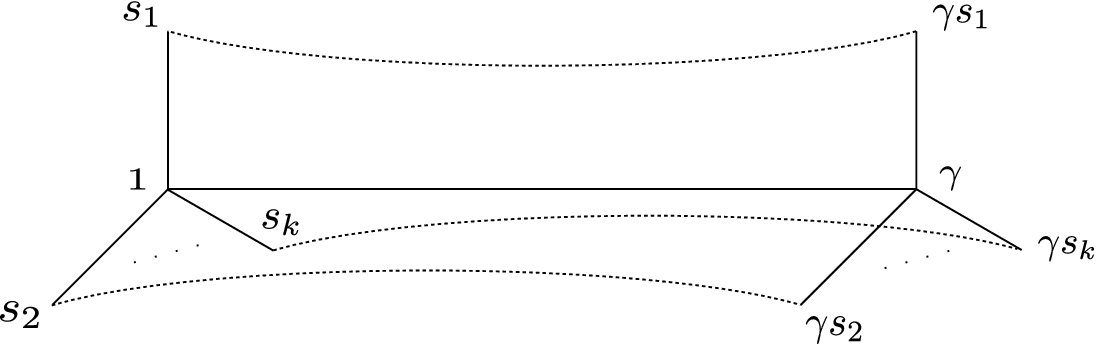}
		\label{fig:Ric-defn-group}
\end{figure}
The definition of curvature depends on the choice of the generating set $S$, but we mostly write $\kappa$ instead of $\kappa_S$ when $S$ is chosen or the context is clear. We also remark that the above definition of curvature differs by a normalization from that in \cite{NDK}, where they define the curvature to be $\bar \kappa=\kappa/|\gamma|$. For the convenience, we work with the unnormalized curvature $\kappa$. However, concerning the results of this paper, all conditions and conclusions involve only the sign of curvature, therefore the results remain valid if we replace $\kappa$ by $\overline{\kappa}$.

With the curvature for groups introduced, our first theorem now states,

\begin{theorem}\label{thm:CheegerGromoll}
Let $\Gamma$ be a group generated by a finite, symmetric set $S$. If outside a ball, the condition $\kappa\ge 0$ holds for all elements on a finite index subgroup of $\Gamma$, and there is an undistorted element $w\in \Gamma$, then $\Gamma$ is virtually abelian.
\end{theorem}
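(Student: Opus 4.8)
The plan is to mimic the proof of the classical splitting theorem, with the undistorted element $w$ playing the role of the bi-infinite geodesic and the hypothesis $\kappa\ge 0$ playing the role of non-negative Ricci curvature; the output is that a finite-index subgroup of $\Gamma$ has an infinite cyclic direct factor, and iterating yields virtual abelianness. First I would make the standard reductions: replacing $\Gamma$ by the given finite-index subgroup, I may assume $\kappa\ge 0$ for every element outside a fixed ball $B$. Since $w$ is undistorted, the stable translation length $\ell(w)=\lim_n |w^n|/n$ exists and is positive, so $n\mapsto w^n$ is a quasi-geodesic line, and every conjugate $gwg^{-1}$ has the same stable length $\ell(w)$. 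For all large $n$ the powers $w^n$ lie outside $B$, so the curvature inequality applies to them.

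The key reformulation is that $\kappa(\gamma)\ge 0$ is exactly the statement that the word length $|\cdot|$ is superharmonic at $\gamma$ for the conjugation random walk $\gamma\mapsto s\gamma s^{-1}$, $s\in S$. I would record the elementary but crucial fact that, because $S$ is symmetric, this walk is symmetric: its transition matrix satisfies $P(\gamma,\gamma')=P(\gamma',\gamma)$, since $s\gamma s^{-1}=\gamma'$ is equivalent to $s^{-1}\gamma' s=\gamma$ and $s^{-1}\in S$. Consequently the walk carries a genuine maximum principle, namely that a superharmonic function which is bounded below and attains its infimum on a communicating class is constant there. Applied along the conjugates of the powers $w^n$, this is the discrete counterpart of the fact that in non-negative Ricci curvature the Busemann functions are superharmonic.

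The heart of the argument, and the step I expect to be hardest, is the rigidity, i.e. the discrete analogue of the flat-strip phenomenon: upgrading the inequality $\kappa\ge 0$ into actual commutation. At leading order the inequality $\frac{1}{|S|}\sum_s |(sws^{-1})^n|\le |w^n|$ is vacuous, since both sides equal $\ell(w)n+o(n)$; the content, exactly as in Cheeger--Gromoll, lies in the equality case. I would introduce two horofunctions $h^\pm$ associated with the two ends of the line $\{w^n\}$, observe that $h^++h^-\ge 0$ with equality along the line, and use the maximum principle above to force $h^++h^-\equiv 0$ and hence harmonicity of $h^\pm$. Translated back to the group, harmonicity should pin down the conjugation action of the generators along the line, forcing the commutators $[w^{-n},s]$ to stay bounded in $n$, and thus forcing a power $w^k$ to centralize a finite-index subgroup $\Gamma_0$. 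Reconciling the conjugation walk, in which $\kappa$ is phrased, with the multiplicative structure governing the horofunctions is the principal technical obstacle, and it is here that the undistortion hypothesis must be used to prevent the line from drifting.

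Finally I would convert centrality into an honest splitting and then induct. The harmonic horofunction provides a homomorphism $\Gamma_0\to\mathbb{Z}$ that is nontrivial on the central element $w^k$, so $\langle w^k\rangle$ splits off as a direct factor, $\Gamma_0\cong N\times\mathbb{Z}$, which is the group-theoretic form of the isometric splitting $N\times\mathbb{R}$. The factor $N$ inherits the curvature hypothesis and has strictly smaller growth, so induction on the degree of growth terminates with $\Gamma$ virtually $\mathbb{Z}^r$, that is, virtually abelian. The delicate point in the induction is ensuring that $N$ still contains an undistorted element so that the splitting can be repeated; alternatively one bypasses this by first showing that $\kappa\ge 0$ forces polynomial growth, hence by Gromov's theorem virtual nilpotence, and then running the reduction on the Hirsch length inside the resulting virtually nilpotent group.
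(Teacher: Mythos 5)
Your proposal has a genuine gap at its central step, and the surrounding architecture also does not close. The heart of your argument is the ``rigidity'' paragraph: you want to attach horofunctions $h^\pm$ to the ends of the quasi-geodesic $\{w^n\}$ and deduce harmonicity, hence commutation. But the superharmonicity you correctly extract from $\kappa\ge 0$ is superharmonicity of the word length for the \emph{conjugation} walk $\gamma\mapsto s\gamma s^{-1}$, whereas Busemann/horofunctions of the word metric are governed by left multiplication; there is no a priori relation between the two, and you yourself flag reconciling them as ``the principal technical obstacle'' without resolving it. Nothing in the proposal actually derives $h^++h^-\ge 0$, its harmonicity, or the boundedness of $[w^{-n},s]$, so the commutation conclusion is unsupported. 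The final induction is also problematic: a direct factor $N$ of a finite-index subgroup does not obviously inherit the hypothesis $\kappa\ge 0$ (the curvature depends on the generating set of $\Gamma$, not of $N$), ``strictly smaller growth'' is not a well-founded induction parameter here, and your fallback --- that $\kappa\ge 0$ outside a ball forces polynomial growth --- is precisely an open question raised in Section 5 of this paper, not a usable lemma.

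Ironically, the one correct observation you make --- that $|\cdot|$ is superharmonic for the symmetric conjugation walk and hence satisfies a minimum principle on each communicating class --- is exactly the engine of the paper's proof, but pointed at conjugacy classes rather than at horofunctions. If a conjugacy class $[x]$ is infinite, the word length takes each value only finitely often on it, so $|\cdot|$ cannot attain its minimum on $[x]$ at a point where $\kappa\ge 0$ holds (superharmonicity at a minimum forces all conjugate neighbours to have equal length, and propagation makes $|\cdot|$ constant on the infinite class, which is absurd); hence every infinite conjugacy class meets the exceptional ball $B(R)$, so there are only finitely many of them. From this the paper deduces that every element with infinite conjugacy class is distorted (two distinct powers $x^i,x^j$ must be conjugate, by pigeonhole), hence the undistorted element $w$ has finite conjugacy class and its centralizer lies in the subgroup $N_0$ of finite-conjugacy-class elements; this makes $N_0$ of finite index, finitely generated, and with center of finite index, giving virtual abelianness with no splitting, no horofunctions, and no induction. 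You should redirect your minimum-principle observation to conjugacy classes and abandon the Busemann-function analogy, which does not survive the mismatch between the conjugation walk and the word-metric geometry.
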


An element $w\in \Gamma$ is called undistorted if the map $\ZZ\to \Gamma$ defined by $n\mapsto w^n$ is a quasi-isometric embedding. The existence of such element is an analog condition to the existence of a bi-infinite geodesic in Riemannian manifolds. We also note that, while the non-negative curvature condition relies on the choice of the generating set, the existence of an undistorted element only depends on the group itself.

Conversely, it is shown in \cite[Theorem 12]{NDK} that for every virtually abelian group, there is a choice of a generating set such that the curvature vanishes on an abelian normal subgroup of finite index. Thus, together with a theorem of Wilking \cite[Theorem 2.1]{Wil00}, the assumption in the above theorem holds if and only if $\Gamma$ is isomorphic to the fundamental group of a closed Riemannian manifold of non-negative Ricci curvature. This gives some evidences that the two notions of Ricci curvature share similarities. We will give a more detailed discussion in Section \ref{subsec:cor-and-appl}.

Our next theorem strengthens the curvature condition, and accordingly we obtain a more refined structure on virtually abelian groups.

\begin{theorem}\label{thm:FZ}
Let $\Gamma$ be a group generated by a finite, symmetric set $S$. If $\kappa\ge 0$ holds for all elements outside a ball, and there is an undistorted element $w\in \Gamma$, then either the center of $\Gamma$ has finite index or $\Gamma$ is virtually cyclic. Conversely, if the center of $\Gamma$ has finite index or $\Gamma$ is virtually cyclic, then there exists a finite generating set $S$ such that $\kappa(x)=0$ for all $|x|>3$.
\end{theorem}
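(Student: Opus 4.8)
The plan is to derive the forward implication from Theorem~\ref{thm:CheegerGromoll} and then sharpen the conclusion by a convexity argument on cosets, while the converse will be handled by choosing adapted generating sets. First I would apply Theorem~\ref{thm:CheegerGromoll}: the present hypotheses are stronger (we assume $\kappa\ge 0$ for \emph{all} elements outside a ball, in particular on a finite index subgroup), so $\Gamma$ is virtually abelian. I fix a finite index normal subgroup $A\cong\ZZ^n$ with finite quotient $Q=\Gamma/A$, and let $\rho\colon Q\to\GL_n(\ZZ)$ be the conjugation action. Since $A$ is abelian, $Z(\Gamma)\cap A$ is exactly the $\rho(Q)$-fixed sublattice, so $Z(\Gamma)$ has finite index iff $\rho$ is trivial; and $\Gamma$ is virtually cyclic iff $n\le 1$ (for $n=1$ the point group lies in $\GL_1(\ZZ)=\{\pm1\}$, which is always virtually cyclic). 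Thus the forward statement reduces to showing that if $n\ge 2$ then $\rho$ is trivial, and I would prove the contrapositive: if $n\ge 2$ and some $M_0=\rho(\bar\gamma_0)\ne I$, then $\kappa<0$ at arbitrarily large elements, contradicting the hypothesis.

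The mechanism is affine convexity on the coset $\gamma_0 A$. For $a\in A$ and a generator $s\in S\cap A$ one computes $s\gamma_0 a s^{-1}=\gamma_0(a+b_s)$ with $b_s=(M_0^{-1}-I)s$, and since $S\cap A$ is symmetric the translations $\{b_s\}$ form a symmetric family, nonzero because $M_0\ne I$ (using, if necessary, the affine parts of the generators outside $A$). The coset $\gamma_0 A$ with the induced metric is isometric to $(A,|\cdot|)$, which lies within bounded distance of its stable norm $\|\cdot\|$ by Burago's theorem, while each generator $s\notin A$ acts to leading order by $\rho(\bar s)$, an isometry of $\|\cdot\|$. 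Hence, to leading order, $|S|\,\kappa(\gamma_0a)\approx\sum_{s\in S\cap A}(\|a\|-\|a+b_s\|)=-\tfrac12\sum_{\text{pairs}}(\|a+b_s\|+\|a-b_s\|-2\|a\|)\le 0$, with strict inequality when $a$ points toward an extreme (vertex) direction of the unit ball of $\|\cdot\|$ transverse to some $b_s$. The main obstacle is that $\kappa$ is a \emph{bounded} function and this convexity deficit is of the same order as the bounded fluctuation $|\cdot|-\|\cdot\|$, so the sign cannot be read off pointwise; I would overcome this by summing $\kappa(\gamma_0 a)$ along a ray $a=t\hat v$ in such an extreme direction and invoking the quasi-linear (piecewise-linear-plus-periodic) structure of word length on abelian groups, so that the fluctuation contributes only a bounded telescoping term while the convexity deficit accumulates linearly in $t$, forcing $\kappa(\gamma_0(t\hat v))<0$ for some large $t$. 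This step is where essentially all the difficulty lies.

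For the converse I would split into the two cases. If $Z(\Gamma)$ has finite index, then every conjugacy class has size at most $[\Gamma:Z(\Gamma)]$, hence is finite; taking $S$ to be the finite symmetric union of the conjugacy classes of a finite generating set makes $S$ invariant under conjugation, so every inner automorphism permutes $S$ and is an isometry of the word metric, giving $\kappa\equiv 0$ on all of $\Gamma$. If $\Gamma$ is virtually cyclic but $Z(\Gamma)$ is not of finite index, then $\Gamma$ is of dihedral type (finite-by-$D_\infty$), since every orientable virtually cyclic group already has finite index center (the cyclic direction becomes central after passing to a finite power); here I would take a generating set modeled on the standard set $\{a,b\}$ of $D_\infty$ together with the finite normal subgroup, and verify by a direct length computation — exactly as in the model case $D_\infty$, where $\kappa(x)=0$ once $|x|\ge 2$ — that conjugation by each generator changes lengths by bounded amounts which cancel in the average once $|x|>3$, the ball of radius $3$ absorbing the finite transient part. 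The converse is thus comparatively routine, the finite-index-center case being immediate and the dihedral case reducing to elementary bookkeeping around the radius-$3$ threshold.
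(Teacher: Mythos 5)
Your reduction of the forward direction (virtually abelian via Theorem \ref{thm:CheegerGromoll}, then the trichotomy on the holonomy representation $\rho:\Gamma\to\GL_n(\ZZ)$, with only the case $n\ge 2$, $\rho$ nontrivial left to exclude) coincides with the paper's, and your converse is essentially the paper's argument (conjugation-closed generating set in the $\op{FZ}$ case; the finite-by-$\ZZ$ case collapses to $\op{FZ}$; explicit bookkeeping for finite-by-$D_\infty$). The gap is exactly the step you flag as carrying all the difficulty. Write $|\gamma_0 c|=\|c\|+g(c)$ with $\|\cdot\|$ the stable norm on $A$ and $g$ bounded. Then for $s\in S\cap A$ the term $|\gamma_0 a|-|\gamma_0(a+b_s)|$ splits as $\left(\|a\|-\|a+b_s\|\right)+\left(g(a)-g(a+b_s)\right)$, and both the convexity deficit and the fluctuation of $g$ are $O(1)$ (as are the contributions of the generators outside $A$, whose sign you also do not control). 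Summing along a ray $a=t\hat v$ does not repair this: since $b_s$ is in general transverse to $\hat v$, the terms $g(t\hat v)-g(t\hat v+b_s)$ do not telescope, so the accumulated noise over $t\le T$ is of order $T$ --- the same order as the accumulated convexity deficit. The only cancellation mechanism available for such error terms is the pairing $(s,x)\leftrightarrow(s^{-1},sxs^{-1})$ used in Proposition \ref{prop:finite}(2), but that pairing cancels signal and noise together and reduces everything to boundary terms. The appeal to a ``piecewise-linear-plus-periodic'' structure of word length is not a citable theorem in the generality needed, and in any case $\gamma_0 a\notin A$, so you are measuring $\Gamma$-word length along a nontrivial coset, where even less is known. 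As written, the crucial case $n\ge 2$ is not excluded.

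The paper's route is genuinely different and worth noting: from $\kappa\ge 0$ outside a ball it first deduces the rigidity $\kappa\equiv 0$ outside a larger ball (Proposition \ref{prop:finite}(2)), which yields a uniform bound on the number of ``exits'' (elements whose norm increases under conjugation by some generator) on each sphere (Proposition \ref{prop:finite-exit}). If $n\ge2$ and $\rho(\alpha)\ne I$, then $\alpha$ commutes with no nontrivial lattice element (Proposition \ref{prop:finite}(4) plus undistortedness), so $w\mapsto w\alpha w^{-1}$ embeds the Cayley graph of $\ZZ^2$ into the conjugacy class $[\alpha]$; the boundary of its intersection with $B(m)$ consists of boundedly many $k$-step exits, contradicting the fact that every exhaustion of $\ZZ^2$ has boundary sizes tending to infinity. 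Any repair of your argument would likely still need the $\kappa\equiv0$ rigidity and some such separation/isoperimetric input; the stable-norm convexity alone does not suffice.
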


Our splitting theorems focus on the non-negative curvature conditions, and we remark that groups with strictly positive curvature has been studied in \cite[Theorem 26]{NDK}. It is shown that only finite groups can have positive curvature outside a ball. This is an analogy to Bonnet-Myers theorem, which states that a closed Riemannian manifold with strictly positive Ricci curvature has finite diameter, and thus finite fundamental group.

Now turning to the negative curvature side, we relate the curvature condition to the growth of a group. We give a positive answer to a question of \cite[Section 7]{NDK}.
\begin{theorem}\label{thm:growth}
Let $\Gamma$ be a finitely generated group, if there exists a normal subgroup $N$ and a constant $R$ such that $\kappa(x)<0$ for all $x\in N$ and $|x|>R$, then $\Gamma$ has exponential growth.
\end{theorem}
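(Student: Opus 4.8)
The strategy is to read the hypothesis $\kappa<0$ on $N$ as a \emph{positive drift} statement for a random walk, deduce from it that the simple random walk on $\Gamma$ has positive speed, and finally convert positive speed into exponential growth. Throughout, $|\cdot|$ is word length, $B_r=\{g:|g|\le r\}$, and we may assume $N$ is infinite (otherwise only finitely many $x\in N$ satisfy $|x|>R$ and the hypothesis is vacuous). Let $s_1,s_2,\dots$ be i.i.d.\ uniform on $S$, let $W_k=s_k\cdots s_1$ be the simple random walk, fix $\gamma\in N$, and consider the conjugation chain $X_k=W_k\gamma W_k^{-1}$. Since $N$ is normal, $X_k$ stays in $N$, indeed in the single conjugacy class $\mathcal C=\{w\gamma w^{-1}:w\in\Gamma\}$, and it is Markov with $X_{k+1}=s_{k+1}X_ks_{k+1}^{-1}$. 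Because word length is integer valued, $\kappa(x)<0$ upgrades to $-\kappa(x)\ge 1/|S|$; hence for $x\in N$ with $|x|>R$ one has $\mathbb E[\,|X_{k+1}|-|X_k|\mid X_k=x\,]=-\kappa(x)\ge 1/|S|$, while $\bigl||X_{k+1}|-|X_k|\bigr|\le 2$ always.

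The first step is to choose $\gamma$ so that $\mathcal C$ is \emph{infinite}, which is what allows the chain to escape. Suppose instead every $x\in N$ with $|x|>R$ had finite conjugacy class. For any finite conjugacy class $\mathcal C'\subseteq N$ the map $y\mapsto sys^{-1}$ permutes $\mathcal C'$ for each $s\in S$, so $\sum_{y\in\mathcal C'}\kappa(y)=0$. Splitting this sum according to whether $|y|>R$ and using $\kappa(y)\le -1/|S|$ on the long part and $\kappa(y)\ge -2$ on the short part yields $\#\{y\in\mathcal C':|y|>R\}\le 2|S|\,\#\{y\in\mathcal C':|y|\le R\}$; summing over all conjugacy classes in $N$ gives $\#\{y\in N:|y|>R\}\le 2|S|\,|N\cap B_R|<\infty$, contradicting the infiniteness of $N$. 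Therefore some $\gamma\in N$ with $|\gamma|>R$ has infinite conjugacy class, and we fix this $\gamma$; note the conjugation chain is irreducible on $\mathcal C$ since $S$ generates $\Gamma$.

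The second and main step is to show the chain escapes to infinity at positive speed. On the infinite, irreducible state space $\mathcal C$, the function $\phi(x)=e^{-\theta|x|}$ with $0<\theta<1/(4|S|)$ is superharmonic off the finite set $F=B_R\cap\mathcal C$: writing $a_s=|sxs^{-1}|-|x|\in[-2,2]$ with $\frac1{|S|}\sum_s a_s\ge 1/|S|$ for $x\notin F$, a second-order Taylor estimate gives $\frac1{|S|}\sum_s e^{-\theta a_s}\le 1-\theta/|S|+4\theta^2<1$, so $\mathbb E[\phi(X_{k+1})\mid X_k=x]\le\phi(x)$. Since $\phi$ is bounded and vanishes at infinity, standard Lyapunov theory for irreducible chains shows $X_k$ is transient; hence the Green's function is finite and the expected total time $\Sigma=\mathbb E_\gamma\#\{k:X_k\in F\}=\sum_{y\in F}G(\gamma,y)$ spent in $F$ is finite. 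Compensating the process $|X_k|$ then gives $\mathbb E|X_k|\ge|\gamma|+k/|S|-(1/|S|+2)\Sigma$, so $\liminf_k\mathbb E|X_k|/k\ge 1/|S|$. From $|X_k|\le 2|W_k|+|\gamma|$ we get $\mathbb E|W_k|\ge k/(2|S|)-O(1)$; as $k\mapsto\mathbb E|W_k|$ is subadditive, the speed $\ell=\lim_k|W_k|/k$ (which exists a.s.\ by Kingman's theorem) satisfies $\ell\ge 1/(2|S|)>0$. I expect this transience/positive-speed step to be the main obstacle, precisely because the drift is effective only outside $B_R$: one must both prevent trapping in a bounded region (handled by the conjugacy-class averaging that forces $\mathcal C$ infinite) and control the time spent near $B_R$ (handled by the exponential Lyapunov function and finiteness of the Green's function).

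Finally, positive speed forces exponential growth. If $\Gamma$ had subexponential growth it would be amenable, so its spectral radius equals $1$ and the Carne--Varopoulos inequality gives $p_k(1,g)\le 2\,e^{-|g|^2/2k}$. Since $|W_k|/k\to\ell$ a.s., we have $\mathbb P(|W_k|\ge \ell k/2)\ge 1/2$ for large $k$, whence $\tfrac12\le\sum_{|g|\ge \ell k/2}p_k(1,g)\le |B_k|\cdot 2\,e^{-\ell^2k/8}$ and therefore $|B_k|\ge\tfrac14\,e^{\ell^2k/8}$, contradicting subexponential growth. Hence $\Gamma$ has exponential growth, completing the proof.
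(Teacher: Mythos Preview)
Your argument is correct (modulo a harmless sign slip: in the conjugacy-class counting you need $\kappa(y)\le 2$ on the short part, not $\kappa(y)\ge -2$, to get the stated inequality), but it takes a very different and considerably heavier route than the paper. The paper never introduces a random walk at all: it simply sums $\kappa$ over the annulus $A_N(R,r_2)=B_N(r_2)\setminus B_N(R)$, uses the pairwise cancellation $\Delta(s,x)+\Delta(s^{-1},sxs^{-1})=0$ to reduce the sum to boundary terms, and reads off $|S_N(r_2)|+|S_N(r_2-1)|\ge \frac{1}{2|S|}|A_N(R,r_2)|$. That inequality converts directly into a linear recurrence $|B_N(n)|-|B_N(R)|\ge (1+\tfrac{1}{2|S|})\bigl(|B_N(n-2)|-|B_N(R)|\bigr)$, which iterates to exponential growth of $|B_N(n)|$ and hence of $|B(n)|$. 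No probability, no Lyapunov functions, no Kesten, no Carne--Varopoulos.

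What your approach buys is a conceptual explanation for \emph{why} negative curvature forces growth: it is literally positive drift for the conjugation chain, which forces positive speed of the ambient walk. This viewpoint might generalize (e.g.\ to weaker or averaged curvature hypotheses, or to other metric spaces with a random walk) where the annulus-summation trick does not. What the paper's approach buys is brevity and self-containment: it uses nothing beyond the triangle inequality and the integrality of $|S|\kappa$, and it gives an explicit base $\sqrt{1+1/(2|S|)}$ for the growth. Your final exponent $\ell^2/8$ with $\ell\ge 1/(2|S|)$ is substantially weaker, and you invoke several nontrivial black boxes (the Foster--Lyapunov transience criterion, Kingman's subadditive ergodic theorem, Kesten's amenability criterion, and the Carne--Varopoulos bound) to reach it. Both proofs tacitly require $N$ to be infinite; you flag this, and the paper's restatement in its growth section adds the hypothesis that $\Gamma$ is infinite, though strictly speaking one needs $N$ infinite for either argument to go through.
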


Lastly, as an application of our splitting theorems, we show an interesting curvature phenomenon for nilpotent groups. This is an analog of a well-known fact proved by Milnor \cite{Milnor76} about mixed Ricci curvatures on non-ablelian nilpotent Lie groups.

\begin{theorem}\label{thm:nilpotent} If $\Gamma$ is a torsion free nilpotent group that is not virtually abelian, then for any finite generating set, there are two sequences $(x_i)$, $(y_i)$ in $\Gamma$ both tending to infinity, which satisfy $\kappa(x_i)>0$ and $\kappa(y_i)<0$ for all $i\in \ZZ^+$.
\end{theorem}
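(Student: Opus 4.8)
The plan is to obtain the two sequences from opposite sources: the negative-curvature sequence by contradiction from the splitting theorem, and the positive-curvature sequence by hand from the distortion of the center. First I would record the standing structural facts. Since $\Gamma$ is torsion-free nilpotent but not virtually abelian, its nilpotency class is some $c\ge 2$; writing $\Gamma=\Gamma_1\supseteq\Gamma_2=[\Gamma,\Gamma]\supseteq\cdots\supseteq\Gamma_c\supseteq\Gamma_{c+1}=1$ for the lower central series, the last term $\Gamma_c\cong\ZZ^r$ is a nontrivial central subgroup which, because the class is at least $2$, is \emph{distorted}: $|v|\asymp\|v\|^{1/d}$ for some integer $d\ge 2$ and all $v\in\Gamma_c$, where $\|\cdot\|$ is the native norm on $\ZZ^r$. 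I also fix an undistorted $w$: the abelianization $\Gamma/[\Gamma,\Gamma]$ has positive rank, and any $w$ whose image there has infinite order is undistorted, since the $1$-Lipschitz abelianization map already forces $|w^n|\gtrsim n$. For the negative sequence I argue by contradiction: if no sequence $y_i\to\infty$ with $\kappa(y_i)<0$ existed, then $\kappa\ge 0$ would hold outside some ball on all of $\Gamma$, and with the undistorted $w$ Theorem~\ref{thm:CheegerGromoll} would force $\Gamma$ to be virtually abelian, contrary to hypothesis. Hence negative-curvature elements occur arbitrarily far out.

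The positive sequence is the crux, and it cannot come from the splitting machinery: every central element $z$ satisfies $szs^{-1}=z$, so $\kappa(z)=0$, which is precisely why Theorem~\ref{thm:growth} can never apply to a nilpotent group. Instead I would exploit the distortion directly. Choose $\gamma_0\in\Gamma_{c-1}\setminus Z(\Gamma)$, possible since $[\Gamma_{c-1},\Gamma]=\Gamma_c\ne 1$. For each $s\in S$ the commutator $u_s:=[\gamma_0^{-1},s]$ lies in the center $\Gamma_c$, the family $\{u_s\}_{s\in S}$ is symmetric with $u_{s^{-1}}=-u_s$ and is not identically zero. Writing $L(v):=|\gamma_0 v|$ for $v\in\Gamma_c$ and using $s\gamma_0 v s^{-1}=\gamma_0 v\,u_s$, one obtains
$$\kappa(\gamma_0 v)=-\frac{1}{2|S|}\sum_{s\in S}\big(L(v+u_s)-2L(v)+L(v-u_s)\big),$$
so that $\kappa(\gamma_0 v)>0$ exactly when $L$ is, on average over the directions $u_s$, concave at $v$. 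Since $L(v)\asymp\|v\|^{1/d}$ with $1/d\le 1/2$, the function $L$ is asymptotically strictly concave along every ray, so these centered second differences should be negative for $v$ far out. Summing the displayed identity over $v$ in annular boxes $\{\,T\le\|v\|\le 2T\,\}$ and summing by parts turns the total into a boundary flux of $\nabla L$, which one expects to be negative because $L$ is sublinear; this would yield elements $\gamma_0 v_i$ with $\|v_i\|\to\infty$, hence $\gamma_0 v_i\to\infty$, and $\kappa(\gamma_0 v_i)>0$.

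The main obstacle is that this positive-curvature signal is \emph{weak}: because the center is distorted, the relevant second differences are of sublinear order in $\|v\|$, comparable to — or smaller than — the $O(1)$ error inherent in any bi-Lipschitz or Pansu-type estimate for $L$. Thus the asymptotic concavity of $\|\cdot\|^{1/d}$ does not by itself settle the \emph{sign} of the integer-valued quantity $\kappa(\gamma_0 v)$, and the naive boundary-flux count may be swamped by accumulated error. Overcoming this requires either sharp (additive) distortion estimates for $|\gamma_0 v|$ on $\Gamma_c$, or boosting the signal above the error by enlarging the conjugation step — for instance replacing $\gamma_0$ by a large power so that the offsets $u_s$ grow with a second parameter — so that strict, not merely asymptotic, negativity of the averaged second differences can be guaranteed along a sequence tending to infinity.
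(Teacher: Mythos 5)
Your argument for the sequence $(y_i)$ is correct and is essentially the paper's: if negative curvature occurred only on a bounded set, then $\kappa\ge 0$ outside a ball, and since an infinite finitely generated nilpotent group has an undistorted element, Theorem~\ref{thm:abelian} (or Theorem~\ref{thm:CheegerGromoll}) would force $\Gamma$ to be virtually abelian (resp.\ $\op{FZ}$ or virtually cyclic), a contradiction. Your construction of the undistorted element via the abelianization is fine.

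The positive-curvature half has a genuine gap, one you yourself flag. Your second-difference identity
$\kappa(\gamma_0 v)=-\frac{1}{2|S|}\sum_{s}\bigl(L(v+u_s)-2L(v)+L(v-u_s)\bigr)$
is correct, but ``asymptotic concavity'' of $L(v)\asymp\|v\|^{1/d}$ cannot decide the sign: the second differences of $\|\cdot\|^{1/d}$ decay to $0$, while $L$ is integer-valued, so for all but a sparse set of $v$ the centered second difference is exactly $0$ and $\kappa(\gamma_0 v)=0$; the positive-curvature elements you seek are precisely the exceptional ones, and neither the pointwise estimate nor the unquantified ``boundary-flux'' summation locates them. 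The paper closes this by inverting the logic: assume $\kappa(x)\le 0$ for all $|x|>R$ and fix $w\in H_3\setminus Z(H)$ (in the paper's indexing of the lower central series), whose conjugacy class is infinite. Starting from an exit $u_0\in[w]$ with $|u_0|>R$, the condition $\kappa(u_{n+1})\le 0$ together with the fact that one generator conjugates $u_{n+1}$ back to the shorter $u_n$ forces some other generator to conjugate $u_{n+1}$ strictly outward; iterating produces conjugates $u_n=z_{i_1}\cdots z_{i_n}u_0$ with $|u_n|\ge |u_0|+n$, where the $z_i=[s_i,u_0]$ are \emph{central and distorted} (via $z_i^{n^2}=[s_i^n,u_0^n]$), whence $|u_n|\le n/2+O(1)$ --- a contradiction. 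This replaces your quantitative concavity estimate with a soft escape-versus-distortion dichotomy along a single conjugation orbit, which is exactly the ingredient your proposal is missing. As written, your proof of the existence of $(x_i)$ is not complete.
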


For the 3-dimensional integral Heisenberg group with the standard generating set, a stronger statement is obtained in \cite[Theorem 16]{NDK} and \cite[Theorem 4.1]{KrophollerMallery20}, namely the sets of positive, negative and zero curvature elements all have positive densities. However, our theorem holds for a large class of nilpotent groups, and more importantly it does not depend on the finite generating set.\\

{\bf More on history.} The first attempt to define Ricci curvature on metric spaces goes back to the 70s, when Garland \cite{Garland} defined p-adic curvature on Euclidean buildings. The curvature is based on the spectral gap of Laplacian on unit spheres. Cheeger, Muller, and Schrader \cite{Cheeger84} defined Ricci curvature for piecewise flat spaces, which coincides with Ricci curvature for Riemannian manifolds and is closed under Gromov-Hausdorff convergence. Sturm \cite{SturmI,SturmII}, and later on Lott and Villani \cite{LottVillani}, defined the notion of Ricci curvature bounded from below. These definitions inherit similar infinitesimal behaviors as in Riemannian Ricci curvature. With a slightly different point of view, Ollivier \cite{Oll09} defined quantitatively the Ricci curvature for metric spaces with a random walk. Ollivier's curvature simulates the Ricci curvature in coarse scale geometry. The notion of curvature we work here was defined by Bar-Natan, Duchin, and Kropholler \cite{NDK}, also with features of coarse geometry (which they call {\it conjugation curvature}), but for groups only.\\

The paper is organized as follows. In Section \ref{sec:preliminary}, we present some background and preparation for the proofs of our theorems. We prove our main splitting theorems together with their applications in Section \ref{sect:splittingthm}. We also discuss the correspondence between finitely generated groups of non-negative curvature and fundamental groups of manifolds of non-negative Ricci curvature. In Section \ref{sec:growth}, we study the growth of groups and prove Theorem \ref{thm:growth}. We end our paper in Section \ref{sec:discuss} with some discussions and further questions.\\

\noindent {\em Acknowledgments:} We thank Jason Behrstock and Jeff Cheeger for helpful discussions. We are grateful to the Department of Mathematics at Indiana University for their hospitality while part of this work was completed. The first author also thanks the Courant Institute of Mathematical Sciences at New York University for their strong support. We thank the referee for pointing out a flaw in the proof of Theorem \ref{thm:abelian} in an earlier draft.

\section{Background}\label{sec:preliminary}

In this section we provide some preliminary background, and prove some key lemmas as a preparation for proofs of our main theorems in next sections.

\subsection{Growth of a group}
Let $\Gamma$ be a group generated by a finite symmetric generating set $S$. Then $\Gamma$ has a natural left invariant word length metric $d$. We set $|\gamma|=d(1,\gamma)$ where $1$ denotes the identity. We denote $B(n)$ the closed ball of radius $n$ centered at the identity, and similarly $S(n)$ the sphere of radius $n$ for any positive integer $n$. If $\Lambda<\Gamma$ is a subgroup, we simply denote $B_\Lambda(n)=B(n)\cap \Lambda$ and $S_\Lambda(n)=S(n)\cap \Lambda$.

We say $\Gamma$ has \emph{polynomial growth} if there exists constants $C, k>0$ such that $|B(n)|\leq C(n^k+1)$ for all integers $n\geq 1$. This property is independent on the choice of finite generating set. By a celebrated theorem of Gromov \cite{Gromov81}, a finitely generated group has polynomial growth only if it is virtually nilpotent. We say $\Gamma$ has \emph{exponential growth} if there exists a constant $a>1$ such that $|B(n)|\geq a^n$ for all integers $n\geq 1$, and again this property is independent on the choice of finite generating set. Note that for finitely generated groups we always have $|B(n)|\leq (|S|+1)^n$, so the exponential growth represents the largest possible growth of a finitely generated group. The following lemma will be used later in our proofs.

\begin{lemma}\label{lem:i-j-conjugacy}
	If $\gamma\in \Gamma$ has infinite order, and $\gamma^i$ is conjugate to $\gamma^j$ for some positive integers $i<j$, then $\Gamma$ has exponential growth.
\end{lemma}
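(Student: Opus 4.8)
The plan is to convert the conjugacy relation into a Baumslag--Solitar type relation and then count group elements directly. Writing the hypothesis as $g\gamma^ig^{-1}=\gamma^j$ for some $g\in\Gamma$, I would first record the iterated relation $g^k\gamma^{i^k}g^{-k}=\gamma^{j^k}$ (immediate induction), and note that after replacing $\gamma$ by $\gamma^{\gcd(i,j)}$ one may assume $\gcd(i,j)=1$. It is worth observing at the outset that the stable translation length $\ell(\gamma)=\lim_n |\gamma^n|/n$ is a conjugacy invariant satisfying $\ell(\gamma^k)=k\,\ell(\gamma)$, so $\ell(\gamma^i)=\ell(\gamma^j)$ forces $\ell(\gamma)=0$; thus $\langle\gamma\rangle$ is distorted, which is the structural source of the anomalous growth we are after.

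The heart of the argument is an explicit exhibition of exponentially many elements in a ball of linear radius. Consider the $2^n$ words $u_\epsilon=\gamma^{\epsilon_1}g\,\gamma^{\epsilon_2}g\cdots\gamma^{\epsilon_n}g$ indexed by $\epsilon=(\epsilon_1,\dots,\epsilon_n)\in\{0,1\}^n$. Right-multiplying by $g^{-n}$ and telescoping gives $u_\epsilon g^{-n}=\prod_{k=1}^n g^{k-1}\gamma^{\epsilon_k}g^{-(k-1)}$. When $i=1$ each factor $g^{k-1}\gamma^{\epsilon_k}g^{-(k-1)}$ equals $\gamma^{j^{k-1}\epsilon_k}$, so $u_\epsilon g^{-n}=\gamma^{m(\epsilon)}$ with $m(\epsilon)=\sum_k \epsilon_k j^{k-1}$. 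Since $j\ge2$, distinct strings give distinct base-$j$ integers $m(\epsilon)$, and because $\gamma$ has infinite order these are distinct powers of $\gamma$; hence the $u_\epsilon$ are pairwise distinct. As $|u_\epsilon|\le n(|\gamma|+|g|)$, this yields $|B(Cn)|\ge 2^n$ and therefore exponential growth. The essential point is that distinctness is certified \emph{intrinsically} by the infinite order of $\gamma$, so the argument is insensitive to whatever extra relations $\Gamma$ may happen to carry.

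The main obstacle is the case $i\ge2$, where this counting breaks down: to make each telescoped factor a power of $\gamma$ one is forced to use blocks $\gamma^{i^{k-1}}$ (since $g^{k-1}\gamma^{i^{k-1}}g^{-(k-1)}=\gamma^{j^{k-1}}$), but such a block has length $\asymp i^{k-1}$, so the words acquire length $\asymp i^n$ and only polynomially many powers of $\gamma$ lie in a ball of linear radius. This is not an artifact of the choice of words: when $\gcd(i,j)=1$ and $i\ge2$, no power of $\gamma$ is conjugate to a proper power of itself, so there is no reduction to the case $i=1$, and the distortion of $\langle\gamma\rangle$ is in fact only polynomial; the exponential growth must then come from genuinely non-abelian (free-subsemigroup, Bass--Serre tree) behaviour rather than from powers of $\gamma$ alone. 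To handle this I would argue through the growth dichotomy: if $\Gamma$ had polynomial growth it would be virtually nilpotent by Gromov's theorem, but after replacing $g$ and $\gamma$ by suitable powers lying in a finite-index nilpotent subgroup (the relation persisting with exponents $i'<j'$) the Mal'cev completion would give $\mathrm{Ad}(g)\log\gamma=(j'/i')\log\gamma$ with $\log\gamma\ne0$, contradicting the unipotence of $\mathrm{Ad}(g)$; hence $\Gamma$ is not virtually nilpotent. Upgrading ``not polynomial'' to ``exponential''\,---\,that is, excluding intermediate growth, presumably by extracting a free subsemigroup of $\langle g,\gamma\rangle$ via a ping-pong argument once nilpotency is ruled out\,---\,is the step I expect to require the most care.
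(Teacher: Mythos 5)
Your argument is complete only in the case $i=1$: the $2^n$ words $u_\epsilon$ do give exponential growth there, and that part is fine. The problem is the case $i\ge 2$, and it occurs at two places. First, the assertion that for $\gcd(i,j)=1$, $i\ge 2$ ``the distortion of $\langle\gamma\rangle$ is in fact only polynomial'' is false, and this is precisely the wrong turn: you appear to be reading the distortion off the iterated relation $g^k\gamma^{i^k}g^{-k}=\gamma^{j^k}$, which only yields the \emph{upper} bound $|\gamma^{j^k}|\le i^k|\gamma|+2k|g|$, i.e.\ $|\gamma^n|\lesssim n^{\log_j i}$; an upper bound coming from one particular family of words is not a lower bound on distortion. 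A better recursion beats it: writing $n=j\lfloor n/j\rfloor+n_0$ with $0\le n_0<j$ gives
$$\gamma^n=(g\gamma^i g^{-1})^{\lfloor n/j\rfloor}\gamma^{n_0}=g\,\gamma^{i\lfloor n/j\rfloor}\,g^{-1}\gamma^{n_0},$$
hence $|\gamma^n|\le |\gamma^{m}|+(2+j)C_0$ with $m\le (i/j)\,n$. Since $i/j<1$, this iterates down to a bounded exponent in $O(\log_{j/i}n)$ steps, so $|\gamma^n|=O(\log n)$ for \emph{every} pair $i<j$. As $\gamma$ has infinite order, the ball $B(N)$ then contains the roughly $(j/i)^{N/C}$ distinct elements $1,\gamma,\dots,\gamma^{\lfloor (j/i)^{N/C}\rfloor}$, and exponential growth follows at once. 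This is the paper's proof, and it requires no case split on $i$.

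Second, even granting your (incorrect) premise, the fallback route for $i\ge2$ is not carried out. Gromov's theorem together with the Mal'cev/unipotence argument would only exclude polynomial growth; you still have to exclude intermediate growth, and you explicitly defer the ping-pong/free-subsemigroup step that would do so. Since groups of intermediate growth exist, ``not virtually nilpotent'' does not imply ``exponential growth,'' so as written the proposal does not prove the lemma for $i\ge 2$. The one sound general observation you make --- that $\ell(\gamma^i)=\ell(\gamma^j)$ forces $\ell(\gamma)=0$ --- is too weak on its own: vanishing stable norm gives only sublinearity of $|\gamma^n|$, whereas it is the logarithmic bound above that packs exponentially many powers of $\gamma$ into a ball of linear radius.
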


\begin{proof}
    Let $g\in \Gamma$ be an element such that $g\gamma^i g^{-1}=\gamma^j$. Fix a constant $C_0>0$ such that $|g|\leq C_0$ and $|\gamma|\leq C_0$. We show inductively $|\gamma^n|<C\log_{j/i}(n)$ where $C$ is a constant such that $C>2C_0+jC_0$ and $|\gamma^2|<C\log_{j/i}(2)$. Suppose $|\gamma|^k<C\log_{j/i}(k)$ holds for all $k\leq n-1$. We write $n=j\cdot\floor{\frac{n}{j}}+n_0$ where $0\leq n_0<j $. By triangle inequalities, 
    
    \begin{equation*}
    	\begin{split}
|\gamma|^n\leq |(\gamma^j)^{\floor{\frac{n}{j}}}|+|\gamma^{n_0}|
&\leq |g\gamma^{i\floor{\frac{n}{j}}}g^{-1}|+n_0|\gamma|\\
&< |\gamma^{i\floor{\frac{n}{j}}}|+2|g|+j|\gamma|	\\
&\leq |\gamma^{i\floor{\frac{n}{j}}}|+2C_0+jC_0.\\
         \end{split}
    \end{equation*}
Now we use induction hypothesis,
 $$|\gamma^{i\floor{\frac{n}{j}}}|\leq C\log_{j/i}(i\floor{\frac{n}{j}})\leq C(\log_{j/i} (n)-1).$$
We recall that $C>2C_0+jC_0$. Thus, it follows that
 \begin{equation}\label{eq:distortion}
|\gamma|^n\leq C\log_{j/i} (n)-C+2C_0+iC_0<C\log_{j/i} (n).
\end{equation}

This implies that we always have distinct elements $\{1,\gamma,\gamma^2,...,\gamma^{\floor{i^{N/C}}}\}$ inside the ball $B(N)$. Hence, $\Gamma$ has exponential growth.
\end{proof}

\subsection{Distortion}

In the above proof, inequality (\ref{eq:distortion}) indicates the large distortion of $\gamma$ in the sense that the set $\{\gamma^i: i=1,\dots ,n\}$ is trapped in a much smaller ball than $B(n)$. In general, we can quantify the distortion and define the \emph{stable norm} of $\gamma$ to be 
$$\ell(\gamma)=\lim_{n\rightarrow \infty}\frac{|\gamma^n|}{n}$$
One can check that the stable norm satisfies the following properties:
\begin{enumerate}
	\item $\ell(\gamma)=\ell(\gamma^{-1})$,
	\item $\ell(\gamma)=\ell(\gamma')$ if $\gamma$ is conjugate to $\gamma'$,
	\item $\ell(\gamma)\leq |\gamma|$,
	\item $\ell(\gamma^n)=n\ell(\gamma)$,
	\item $\ell(\gamma_1\gamma_2)\leq \ell(\gamma_1)+\ell (\gamma_2)$, if $\gamma_1,\gamma_2$ commutes.
\end{enumerate}

We say an element $\gamma\in \Gamma$ is \emph{distorted} if $\ell(\gamma)=0$, and \emph{undistorted} otherwise. Being (un)distorted is independent on the choice of the finite generating set. It follows immediately from the proof of Lemma \ref{lem:i-j-conjugacy} that,

\begin{corollary}\label{cor:distortion}
	If $\gamma^i$ is conjugate to $\gamma^j$ for two distinct positive integers, then $\gamma$ is distorted.
\end{corollary}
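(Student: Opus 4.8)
The plan is to read off the corollary directly from the logarithmic distortion estimate already produced in the proof of Lemma~\ref{lem:i-j-conjugacy}, unwinding the definition $\ell(\gamma)=\lim_{n\to\infty}|\gamma^n|/n$. First I would normalize the hypothesis: since conjugacy is symmetric, if $\gamma^i$ is conjugate to $\gamma^j$ with $i\neq j$ positive, then after possibly swapping the two exponents I may assume $i<j$, putting us exactly in the setting of Lemma~\ref{lem:i-j-conjugacy}. I would also separate off the degenerate case in which $\gamma$ has finite order: then the powers $\gamma^n$ take only finitely many values, so $|\gamma^n|$ is bounded by some constant $M$, whence $|\gamma^n|/n\le M/n\to 0$ and $\ell(\gamma)=0$ trivially. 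This is precisely why the infinite-order assumption appears in the lemma but is not needed for the corollary.

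For the substantive case, with $\gamma$ of infinite order, I would invoke the inequality (\ref{eq:distortion}) from the proof of Lemma~\ref{lem:i-j-conjugacy}, namely $|\gamma^n|\le C\log_{j/i}(n)$ for a fixed constant $C=C(\gamma,i,j)$ and all large $n$. Dividing by $n$ and letting $n\to\infty$, the right-hand side tends to $0$; since $|\gamma^n|\ge 0$, the squeeze theorem forces the defining limit $\ell(\gamma)=\lim_{n\to\infty}|\gamma^n|/n$ to equal $0$. By the definition of distortion given above, this says exactly that $\gamma$ is distorted, completing the argument.

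The hard part is, in fact, already behind us: all of the genuine content lives in the inductive bound of Lemma~\ref{lem:i-j-conjugacy}, and the corollary is nothing more than a rereading of that bound through the stable norm. The only points demanding any care are the two preliminary reductions—the symmetric reordering to $i<j$ and the finite-order case—both of which are immediate. I do not expect a separate obstacle here, which is consistent with the remark in the text that the statement follows at once from the proof of the lemma.
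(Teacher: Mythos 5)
Your proposal is correct and matches the paper's proof exactly: the paper also splits into the finite-order case (where distortion is automatic) and the infinite-order case, where it invokes inequality (\ref{eq:distortion}) to conclude $\ell(\gamma)=0$. Your write-up merely makes explicit the routine details (the reduction to $i<j$ and the division by $n$) that the paper leaves to the reader.
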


\begin{proof}
	If $\gamma$ has finite order, then it is automatically distorted. Otherwise, use inequality (\ref{eq:distortion}).
\end{proof}

\subsection{Orbit decomposition under conjugation}

Let $\Gamma$ be any group. $\Gamma$ acts on itself by conjugation. This gives rise to an orbit decomposition $$\Gamma=\bigcup_{\gamma\in \Gamma }[\gamma],$$
where $[\gamma]$ is the $\gamma$-orbit representing the set of elements conjugate to $\gamma$. We can write $\Gamma$ as a disjoint union
$$\Gamma=\Gamma_0\cup X,$$
where $\Gamma_0=\{\gamma\in \Gamma|\;[\gamma] \;\textrm{is finite}\}$, and $X=\{\gamma\in \Gamma|\;[\gamma] \;\textrm{is infinite}\}$. Moreover, $\Gamma_0$ is a normal subgroup of $\Gamma$. On the other hand, the product of an element with infinite conjugacy class and one with finite conjugacy class has infinite conjugacy class, we will frequently use this fact in the proofs later. Hence, $\Gamma_0$ acts on $X$ by left translation.

If $N$ is a normal subgroup of $\Gamma$, then the above decomposition passes onto $N$. Since $N$ is normal, $\gamma\in N$ implies $[\gamma]\subset N$. Thus we have
$$N=N_0\cup X_{N},$$
where $N_0=\{\gamma\in N |\;[\gamma] \;\textrm{is finite}\}$ and $X_{N}=\{\gamma\in N |\;[\gamma] \;\textrm{is infinite}\}$. Note that $[\gamma]$ always means the whole $\Gamma$-orbit, which might differ from the $N$-orbit even when $\gamma\in N$.

If the subgroup is not normal, then the above decomposition fails. However, if $N$ has finite index, we can always take the normal core in order to obtain a normal subgroup.

\begin{lemma}\label{lem:subgroup-to-normal}
	If $H$ is a finite index subgroup of $G$, then there exists a finite index normal subgroup $N$ of of $G$ which is contained in $H$.
\end{lemma}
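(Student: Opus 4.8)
The plan is to prove the classical fact that every finite index subgroup $H$ of $G$ contains a finite index normal subgroup $N$, by taking the \emph{normal core} of $H$, namely $N=\bigcap_{g\in G} gHg^{-1}$. The core is manifestly normal in $G$: for any $x\in G$ we have $xNx^{-1}=\bigcap_{g\in G} (xg)H(xg)^{-1}=\bigcap_{h\in G} hHh^{-1}=N$, since as $g$ ranges over $G$ so does $xg$. It is also clearly contained in $H$ (take $g=1$ in the intersection). Thus the only substantive point is to verify that $N$ has finite index in $G$.

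To control the index, I would use the action of $G$ on the finite set of left cosets $G/H$. Let $n=[G:H]$, which is finite by hypothesis. Left multiplication gives a homomorphism $\phi\colon G\to \mathrm{Sym}(G/H)\cong S_n$, and the key observation is that the kernel of $\phi$ is exactly the normal core $N$. Indeed, $g$ fixes every coset iff $gkH=kH$ for all $k\in G$, i.e.\ $k^{-1}gk\in H$ for all $k$, i.e.\ $g\in kHk^{-1}$ for all $k$, which says precisely $g\in N$. Since $\ker\phi$ is always a normal subgroup and $G/\ker\phi$ embeds into $S_n$, we conclude $[G:N]=[G:\ker\phi]\le |S_n|=n!<\infty$.

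Putting these together: $N=\ker\phi$ is a normal subgroup of $G$, it is contained in $H$ because $N=\bigcap_g gHg^{-1}\subseteq H$, and it has finite index bounded by $n!$ where $n=[G:H]$. This completes the proof.

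I do not anticipate a genuine obstacle here, as this is a standard result; the only care needed is to present the two descriptions of $N$ (as the core and as $\ker\phi$) and to check they coincide, which makes both normality and finiteness of index transparent. If one preferred to avoid identifying the kernel explicitly, an alternative route is to observe directly that $N$ is the intersection of the finitely many \emph{distinct} conjugates of $H$ (there are at most $[G:H]$ of them, since conjugating $H$ by elements in the same coset of $H$ — or more precisely of the normalizer — yields the same subgroup), and that a finite intersection of finite index subgroups has finite index by the standard estimate $[G:H_1\cap\dots\cap H_m]\le \prod_i [G:H_i]$. Either approach works; I would favor the coset-action argument for its cleanliness and for the explicit bound $n!$ it provides.
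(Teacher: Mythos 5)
Your proposal is correct and uses essentially the same argument as the paper: both take the kernel of the permutation representation of $G$ on the cosets $G/H$, which is normal, has index at most $n!$, and lies in $H$. Your additional identification of this kernel with the normal core $\bigcap_{g} gHg^{-1}$ is a fine (and standard) elaboration but not a different route.
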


\begin{proof}
  $G$ acts by left translation on the set of cosets $G/H$. This gives rise to a representation $\rho:G\rightarrow S_n$, where $n=|G/H|$. Set $N=\ker \rho$, then $N$ is a finite index normal subgroup of $G$, and it is contained in $H$.
\end{proof}

\subsection{Non-negative curvature}

Following the above notation, we now relate the curvature conditions to some of the intrinsic properties of a group.

\begin{proposition}\label{prop:finite}
	If $\Gamma$ is finitely generated by $S$ and $N$ is a finite index normal subgroup which satisfies $\kappa(x)\geq 0$ for all $x\in N\backslash B(R)$, then 
\begin{enumerate}
  \item $X_N$ consists of only finitely many conjugacy classes, and the number of conjugacy classes is at most the number of elements in $B(R)$.
  \item $\kappa (x)= 0$ for all but finitely many $x\in N$.
  \item If $x\in X_N$, then $\ell(x)=0$.
  \item If $x\in N$ is undistorted, then the centralizer $Z_N(x)=\{\gamma\in N: \gamma x= x\gamma\}$ is contained in $N_0$.
\end{enumerate}	
	
\end{proposition}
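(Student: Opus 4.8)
The plan is to prove the four claims in the order (1), (2), (3), (4), proving (1) first, feeding it into (3), and then deriving (4) from (3); claim (2) can be handled independently. For (1), I would take an infinite conjugacy class $C=[x]\subseteq N$ and choose $x_0\in C$ of minimal word length. Minimality forces $|x_0|\le |sx_0s^{-1}|$ for every $s\in S$, so $\kappa(x_0)\le 0$. If $|x_0|>R$, the hypothesis gives $\kappa(x_0)\ge 0$, hence $\kappa(x_0)=0$ and $|sx_0s^{-1}|=|x_0|$ for all $s$. Each $sx_0s^{-1}$ is then again a minimizer of length $>R$, so the argument propagates; since $S$ generates $\Gamma$, every conjugate $gx_0g^{-1}$ is reached by iterated generator-conjugation and therefore has length exactly $|x_0|$, forcing $C\subseteq S(|x_0|)$ to be finite, a contradiction. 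Thus every infinite class meets $B(R)$, and as distinct classes are disjoint there are at most $|B(R)|$ of them.

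For (2), I would sum $\kappa$ over the balls $A_n=N\cap B(n)$ and exploit that $\sigma_s\colon x\mapsto sxs^{-1}$ permutes $N$. Writing $\Sigma_n=\sum_{x\in A_n}\kappa(x)=\frac1{|S|}\sum_s\big(\sum_{x\in A_n}|x|-\sum_{u\in\sigma_s(A_n)}|u|\big)$, the contributions on $A_n\cap\sigma_s(A_n)$ cancel, leaving a boundary difference between the outgoing set $A_n\setminus\sigma_s(A_n)$, whose elements all have length $\le n$, and the incoming set $\sigma_s(A_n)\setminus A_n$, whose elements all have length $\ge n+1$. Because $\sigma_s$ is a bijection these two sets have equal cardinality, so the boundary difference is at most $-|A_n\setminus\sigma_s(A_n)|\le 0$, whence $\Sigma_n\le 0$ for every $n$. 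On the other hand the hypothesis makes every term with $|x|>R$ nonnegative, so $\Sigma_n\ge\sum_{x\in N\cap B(R)}\kappa(x)\ge -C_1$ for a fixed constant $C_1$. Hence the tail $\sum_{x\in N,\,R<|x|\le n}\kappa(x)=\Sigma_n-\sum_{x\in N\cap B(R)}\kappa(x)$ is a sum of nonnegative multiples of $1/|S|$ bounded by $C_1$ uniformly in $n$, so only finitely many $x$ can have $\kappa(x)\neq 0$. I expect the sign of this boundary term to be the main obstacle: it is the one genuinely new input, and without it the naive ``telescoping sum is zero'' heuristic fails to be rigorous since the shells grow.

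For (3), let $x\in X_N$. I would first note that $x\notin N_0$ together with $N_0$ being a subgroup forces infinitely many powers $x^n$ to lie in $X_N$: otherwise $x^n\in N_0$ for all large $n$, and two consecutive such powers would give $x=x^{n+1}(x^n)^{-1}\in N_0$, a contradiction. By part (1), $X_N$ consists of only finitely many conjugacy classes, so by pigeonhole two of these powers are conjugate, $x^{n_1}\sim x^{n_2}$ with $n_1\neq n_2$; Corollary~\ref{cor:distortion} then yields $\ell(x)=0$.

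Finally, for (4), suppose $x\in N$ is undistorted and, for contradiction, that some $z\in Z_N(x)$ lies in $X_N$. By (3), $\ell(z)=0$. Since $x$ and $z$ commute, the stable-norm properties give $\ell(x^nz)=n\ell(x)>0$ for every $n\ge1$, so each $x^nz$ is undistorted and, by (3) again, must lie in $N_0$. From $xz,\,x^2z\in N_0$ I obtain $x=(x^2z)(xz)^{-1}\in N_0\subseteq\Gamma_0$, so every $x^n$ has finite conjugacy class. But then $z$ has infinite conjugacy class while $x^n\in\Gamma_0$ has finite conjugacy class, and since the product of an element with infinite conjugacy class and one with finite conjugacy class again has infinite conjugacy class, $x^nz\in X_N$ — contradicting $x^nz\in N_0$. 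Therefore $Z_N(x)\subseteq N_0$.
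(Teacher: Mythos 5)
Your proposal is correct and follows essentially the same route as the paper's: nonnegative curvature forces every infinite conjugacy class to meet $B(R)$, a telescoping curvature sum with controlled boundary terms gives (2), and (3)--(4) combine pigeonhole on the powers $x^n$ with Corollary \ref{cor:distortion} and subadditivity of the stable norm on commuting elements. The only differences are cosmetic: in (1) you take a minimal-length representative and show rigidity traps the class on a sphere where the paper inducts downward to $B(R)$, and in (2) you sum over balls via the bijection $x\mapsto sxs^{-1}$ rather than over annuli with pairwise cancellation.
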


\begin{proof} For $(1)$: It suffices to show that every element $x\in X_N$ is conjugate to an element in $B(R)$. We show inductively on the norm $|x|$. Suppose each element in $B(n-1)\cap X_N$ is conjugate to an element in $B(R)$, let $x$ be any element in $X_N$ satisfying $|x|=n$. Since $[x]$ is infinite, the conjugacy class has to leave the ball $B(n)$ at some stage, hence there exists $x'\in [x]$ and $s'\in S$ such that $|x'|\leq n$ and $|s'x's'^{-1}|>n$. On the other hand, $\kappa(x')\geq 0$ implies
	$$\sum_{s\in S}(|x'|-|sx's^{-1}|)\geq 0.$$
	Combining the two inequalities, there exists $t\in S$ such that $|tx't^{-1}|<|x'|=n$. Therefore $x$ is conjugate to $tx't^{-1}$, an element inside $B(n-1)\cap X_N$ as $N$ is normal. By induction hypothesis, it is further conjugate to an element in $B(R)$.
	
For (2): We expand the idea from \cite[Theorem 26]{NDK}. Let $r_1$, $r_2$ be two arbitrary integers satisfying $R\leq r_1<r_2-4$. Consider on $N$ the annulus $A_N(r_1,r_2)=B_{N}(r_2)\backslash B_{N}(r_1)$. We sum up all the curvatures over this annulus, and we obtain that
	\begin{equation}\label{eq:sum-of-curvature}
    \sum_{x\in A_N(r_1,r_2)}\kappa(x)=\frac{1}{|S|}\sum_{(s,x)\in S\times A_N(r_1,r_2)}(|x|-|sxs^{-1}|).
	\end{equation}
	Denote $\Delta(s,x)=|x|-|sxs^{-1}|$. By triangle inequality we know $|\Delta(s,x)|$ is an integer $\leq 2$. Moreover, if $y=s_0xs_0^{-1}$ for some $s_0\in S$, then $\Delta(s_0,x)+\Delta(s_0^{-1},y)=0$. Therefore, using the fact that $N$ is normal, the sum in the Equation \ref{eq:sum-of-curvature} is pairwise canceled except when $x$ is close to the two boundaries of the annulus, such that $(s,x)$ belongs to exactly one of the following sets
	\begin{enumerate}
		\item $Y_1=\{(s,x)\in S\times (S_N(r_1+1)\cup S_N(r_1+2)):|sxs^{-1}|\leq r_1\}$ or
		\item $Y_2=\{(s,x)\in S\times (S_N(r_2)\cup S_N(r_2-1)):|sxs^{-1}|\geq r_2+1\}$.
	\end{enumerate}
    Thus we can simplify the sum in Equation \ref{eq:sum-of-curvature} to the following
    \begin{equation}\label{eq:sum-simplified}
    \sum_{x\in A_N(r_1,r_2)}\kappa(x)=\frac{1}{|S|}\sum_{(s,x)\in Y_1\cup Y_2}\Delta(s,x).
    \end{equation}
    Notice that $Y_1$ always contributes to positive terms in Equation \ref{eq:sum-simplified} and $Y_2$ only contributes negative, also the cardinalities of the two sets are bounded by
    $$|Y_1|\leq |S|\cdot(|S_N(r_1+1)|+|S_N(r_1+2)|),$$
    $$|Y_2|\leq |S|\cdot(|S_N(r_2-1)|+|S_N(r_2)|).$$
    So we can bound Equation \ref{eq:sum-simplified} further
    $$\sum_{x\in A_N(r_1,r_2)}\kappa(x)\leq \frac{1}{|S|}\sum_{(s,x)\in Y_1}\Delta(s,x)\leq 2 (|S_N(r_1+1)|+|S_N(r_1+2)|).$$
    This shows that the sum of curvatures $\sum_{x\in A_N(r_1,r_2)}\kappa(x)$ stabilizes 
    when $r_1$ is fixed and $r_2\rightarrow \infty$. This means eventually $\kappa(x)$ has to be $0$ outside a sufficiently large ball.

For $(3)$: If $x$ has finite order, then clearly $\ell(x)=0$. If $x$ has infinite order, we look at the conjugacy classes $[x^n]$ for $n\in \ZZ$. If $x^k$ has finite conjugacy class, then $x^{k+1}=x\cdot x^k$ has infinite conjugacy class. Therefore, $[x^n]$ is infinite for infinitely many $n\in \NN$. Thus by $(1)$ and the Pigeonhole Principle, we have two distinct positive integers $i, j$ such that $x^i$ is conjugate to $x^j$. Therefore by Corollary \ref{cor:distortion}, $x$ is distorted.

For $(4)$: We prove by contradiction. Suppose $Z_N(x)\cap X_N\neq \varnothing$, and we pick any element $y$ inside the intersection. Since $x$ is undistorted, $[x]$ is finite according to $(3)$. By the choice of $y$, $[y]$ is infinite, so $[x y]$ is also infinite. This implies $\ell(y)=0$ and $\ell(xy)=0$ again by $(3)$. Since $x$ commutes with $y$, we have $\ell(x)\leq \ell(x y)+\ell(y^{-1})=0$, which gives a contradiction.
\end{proof}

\section{Splitting theorems}\label{sect:splittingthm}

\subsection{Proof of Splitting theorems}
In this section, we prove Theorem \ref{thm:CheegerGromoll} and \ref{thm:FZ}. First, we restate our Theorem \ref{thm:CheegerGromoll},

\begin{theorem}\label{thm:splitting1}
Let $\Gamma$ be a group generated by a finite, symmetric set $S$. If $\Gamma$ has an undistorted element $w$, and there is a finite index subgroup $N< \Gamma$, such that $\kappa(x)\ge 0$ for all but finitely many $x\in N$, then $\Gamma$ is virtually abelian.
\end{theorem}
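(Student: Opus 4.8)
The plan is to reduce to the normal-core situation and then show that an undistorted element generates a finite-index central (or nearly central) subgroup, after which we invoke Wilking's structure theorem to conclude. First I would replace the finite index subgroup $N$ by a finite index \emph{normal} subgroup contained in it, using Lemma \ref{lem:subgroup-to-normal}; since the curvature hypothesis $\kappa(x)\ge 0$ for all but finitely many $x$ passes to any subgroup (the defining sum is unchanged), the normalized subgroup still satisfies the hypothesis, and it suffices to prove the theorem for $N$ in place of $\Gamma$ because virtual abelianness is inherited up to finite index. Next I would locate an undistorted element \emph{inside} $N$: the given $w$ is undistorted, and since $N$ has finite index some power $w^m$ lies in $N$; by property (4) of the stable norm, $\ell(w^m)=m\,\ell(w)\neq 0$, so $w^m\in N$ is undistorted as well.

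The heart of the argument is to analyze the centralizer of this undistorted element via Proposition \ref{prop:finite}. Set $x=w^m\in N$. By part (3) of the Proposition, every element of $X_N$ is distorted, so $x\in N_0$ (its conjugacy class is finite). By part (4), the centralizer $Z_N(x)$ is contained in $N_0$. The key step I expect to carry the weight is to promote this to a statement that $Z_N(x)$, or a suitable finite-index subgroup of it, is \emph{virtually central} in $N$, so that the structure of $N$ becomes abelian-by-finite. Concretely, I would argue that because $x$ has a finite conjugacy class, its centralizer $Z_\Gamma(x)$ has finite index in $\Gamma$, hence $Z_N(x)$ has finite index in $N$; combined with $Z_N(x)\subseteq N_0$, this forces $N_0$ itself to have finite index in $N$. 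One then studies $N_0$, the FC-center, whose elements all have finite conjugacy classes; a finitely generated group in which a finite-index subgroup consists of elements with finite conjugacy classes is virtually abelian by a classical result (BFC / Neumann-type theorems), provided one controls torsion. The undistorted element guarantees genuine growth in one direction and rules out the degenerate finite case.

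To make this precise without invoking external machinery, I would instead route through Wilking's theorem as the paper advertises: the combination ``$\kappa\ge 0$ outside a ball on a finite index subgroup'' together with the existence of an undistorted element is designed to match the hypothesis $(\ast)$ of \cite[Theorem 2.1]{Wil00}, which characterizes virtually abelian groups. So after establishing that $N_0$ has finite index and contains an undistorted element whose centralizer is large, I would verify that $N$ satisfies Wilking's algebraic condition and conclude virtual abelianness directly. The main obstacle, and the step I would spend the most care on, is bridging from the purely \emph{metric} output of Proposition \ref{prop:finite} (statements about $\ell$ and conjugacy classes) to a genuine \emph{algebraic} finiteness statement about the index of the center or of $N_0$; in particular I must rule out the possibility that $N_0$ has infinitely many distinct conjugacy classes accumulating in a way that prevents the centralizer from having finite index, which is exactly where the undistortedness of $w$ and part (4) of the Proposition must be combined rather than used in isolation.
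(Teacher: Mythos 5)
Your main line is exactly the paper's proof: pass to the normal core via Lemma \ref{lem:subgroup-to-normal}, push a power of $w$ into $N$, use parts (3) and (4) of Proposition \ref{prop:finite} to get $w^m\in N_0$ and $Z_N(w^m)\subseteq N_0$, deduce that $N_0$ has finite index in $\Gamma$ (hence is finitely generated), and finish with the fact that a finitely generated group all of whose conjugacy classes are finite has center of finite index. Two corrections to how you close. First, that last step needs no torsion control and there is no danger of conjugacy classes ``accumulating'': writing $N_0=\langle t_1,\dots,t_k\rangle$, each $Z_{N_0}(t_i)$ has finite index in $N_0$ because $[t_i]$ is finite, and $Z(N_0)=\bigcap_{i=1}^k Z_{N_0}(t_i)$ is a finite intersection of finite-index subgroups, hence of finite index; that is the entire argument, and it is what the paper does. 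Second, drop the proposed detour through Wilking: \cite[Theorem 2.1]{Wil00} characterizes fundamental groups of closed manifolds of non-negative Ricci curvature by the algebraic condition $1\to\ZZ^n\to\Gamma\to K\to 1$ with $K$ finite, so ``verifying Wilking's algebraic condition'' presupposes that $\Gamma$ is virtually abelian, which is precisely the conclusion you are trying to reach; the group-theoretic hypothesis $\kappa\ge 0$ cannot be substituted for Wilking's geometric hypothesis, and that route is circular. With the elementary centralizer argument in place of it, your proof is the paper's proof.
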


\begin{proof} By Lemma \ref{lem:subgroup-to-normal}, we may assume $N$ is normal. Following the notation in Section \ref{sec:preliminary}, $N_0$ denotes the set of elements having finite conjugacy classes, and $X_N$ denote the set of elements having infinite conjugacy classes. We first show that $N_0$ is finitely generated. If $X_N=\varnothing$, then $N_0=N$, which has finite index in $\Gamma$, thus is finitely generated. Suppose $X_N\neq\varnothing$, then there exists $x\in N$ such that $[x]$ is infinite. Since $N$ has finite index in $\Gamma$, some power of $w$ is contained in $N$. We note that if $w$ is undistorted, then so are its powers. Thus, by abusing notation, we may assume $w\in N$. Since $w$ is undistorted, it has infinite order and $w\in N_0$ by $(3)$ of Proposition \ref{prop:finite}. By $(4)$ of Proposition \ref{prop:finite}, $Z_N(w)\subset N_0\subset N$. Because the conjugacy class of $w$ in $\Gamma$ is finite, the conjugacy class of $w$ in $N$ is also finite. Hence, $Z_N(w)$ has finite index in $N$. This implies $N_0$ has finite index in $N$. Thus we conclude that $N_0$ has finite index in $N$, hence also in $\Gamma$. In particular, $N_0$ is finitely generated. 

To finish, we let $t_1,...,t_k$ be a set of generators of $N_0$. For every $i=1,\dots, k$, since $[t_i]$ is finite, we have $Z_{N_0}(t_i)<N_0$ of finite index. So the center $Z(N_0)=\bigcap\limits_{i=1}^k Z_{N_0}(t_i)$ also has finite index in $N_0$. In particular, $N_0$ is virtually abelian. Therefore, $N$ is virtually abelian and so is $\Gamma$.
\end{proof}

\begin{remark}
We note that our assumption of Theorem \ref{thm:CheegerGromoll} is not vacuous, and in fact is optimal. Indeed, by \cite[Theorem 12]{NDK}, every virtually abelian group has a normal subgroup of finite index on which the curvature vanishes. Thus our theorem completely characterizes virtually abelian groups.
\end{remark}

Now turning to the proof of Theorem \ref{thm:FZ}, it is convenient to introduce the following definition.

\begin{definition}
We say $\Gamma$ is an $\op{FZ}$-group if the center $Z(\Gamma)$ has finite index in $\Gamma$.
\end{definition}

We divide Theorem \ref{thm:FZ} into two parts. The first part of Theorem \ref{thm:FZ} now states as follows.

\begin{theorem}\label{thm:abelian}
Let $\Gamma$ be a group generated by a finite, symmetric set $S$. If $\Gamma$ has an undistorted element, and there is $R>0$ such that $\kappa(x)\ge 0$ for every $x\notin B_\Gamma(R)$, then either $\Gamma$ is an $\op{FZ}$-group or $\Gamma$ is virtually cyclic.
\end{theorem}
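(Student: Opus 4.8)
\textbf{Proof strategy for Theorem \ref{thm:abelian}.}

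The plan is to leverage Theorem \ref{thm:splitting1} to first establish that $\Gamma$ is virtually abelian, and then refine this structure by analyzing the center more carefully. Under the present hypothesis, $\kappa(x) \geq 0$ holds for all $x \notin B_\Gamma(R)$; in particular, taking $N = \Gamma$ (which is certainly normal and of finite index in itself), the curvature is non-negative for all but finitely many elements. Hence Theorem \ref{thm:splitting1} applies directly, and we may assume from the outset that $\Gamma$ is virtually abelian. The remaining task is the dichotomy: to show that such a $\Gamma$ is either an $\op{FZ}$-group or virtually cyclic.

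First I would pass to a finite-index normal abelian subgroup $A \triangleleft \Gamma$, which exists since $\Gamma$ is virtually abelian (applying Lemma \ref{lem:subgroup-to-normal} to the normal core if necessary). The group $\Gamma$ acts on $A$ by conjugation, giving a homomorphism $\Gamma \to \op{Aut}(A)$. The key structural fact I want to exploit is that the hypotheses force this conjugation action to be tightly constrained: any element of $A$ that is moved nontrivially under conjugation by some $g \in \Gamma$ will generate an infinite conjugacy class, and hence by part $(3)$ of Proposition \ref{prop:finite} must be distorted, i.e.\ $\ell = 0$. Since $A$ is a finitely generated abelian group, it is (up to finite torsion) a lattice $\ZZ^n$, and distortion translates into a constraint on the eigenvalues of the conjugation automorphisms. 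The plan is to show that the subgroup $A_0 = A \cap N_0$ of elements with finite conjugacy class is exactly the subgroup on which $\Gamma$ acts trivially, i.e.\ $A_0 = A \cap Z(\Gamma)$ essentially captures the fixed part of the action.

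The crux of the argument is then a rank count. Because $w$ is undistorted, by parts $(3)$ and $(4)$ of Proposition \ref{prop:finite} a suitable power of $w$ lies in $N_0$ and its centralizer has finite index, so $N_0$ has finite index and is itself virtually abelian with $Z(N_0)$ of finite index. If the centralized part $A_0$ has full rank $n$ in $A$, then $A \cap Z(\Gamma)$ has finite index in $A$, hence in $\Gamma$, so $\Gamma$ is an $\op{FZ}$-group. Otherwise $A_0$ has strictly smaller rank, and I would argue that the complementary directions are so heavily distorted by the conjugation action that they cannot support two independent undistorted directions — the existence of $w$ pins down a single undistorted direction, forcing the undistorted part to be at most rank one, which yields virtual cyclicity. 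Concretely, I expect to use Lemma \ref{lem:i-j-conjugacy} and Corollary \ref{cor:distortion}: if an element of $A$ has a conjugate equal to a proper power of itself, it is distorted, and the non-negative curvature condition (via the annulus cancellation argument underlying Proposition \ref{prop:finite}) restricts how many genuinely undistorted, mutually non-conjugate directions can coexist.

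The main obstacle I anticipate is precisely the intermediate case analysis on the rank of the centralized subgroup $A_0$: ruling out ranks strictly between $1$ and $n$ and showing they collapse into virtual cyclicity. The delicate point is that non-negative curvature is a condition about the whole group $\Gamma$ and its word metric, not just about the abstract abelian structure of $A$, so translating eigenvalue/distortion data back into the curvature hypothesis — and ensuring the finitely many low-norm exceptions do not interfere — will require care. I expect the cleanest route is to work with $N_0$ and its finite-index center, track which directions of $A$ remain undistorted, and show that the undistorted subgroup, being centralized and hence contributing to $Z(\Gamma)$, must either be everything (the $\op{FZ}$ case) or rank one (the virtually cyclic case), with no stable intermediate possibility.
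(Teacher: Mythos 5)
Your opening step is correct and matches the paper: taking $N=\Gamma$ in Theorem \ref{thm:splitting1} gives that $\Gamma$ is virtually abelian, and one then passes to a finite-index normal $\Lambda\cong\ZZ^n$ and studies the conjugation representation $\rho:\Gamma\to\GL_n(\ZZ)$. The cases ``$\rho$ trivial'' (FZ) and ``$n=1$'' (virtually cyclic) are also handled the same way. The gap is in the remaining case $n\ge 2$ with $\rho$ nontrivial, which is the entire content of the theorem, and the mechanism you propose there does not work. Your plan rests on a distortion/rank count inside $A\cong\ZZ^n$: you want the ``non-centralized directions'' of $A$ to be distorted, leaving at most one undistorted direction. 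But since $\Lambda$ has finite index, the inclusion $\ZZ^n\hookrightarrow\Gamma$ is a quasi-isometry, so \emph{every} infinite-order element of $A$ is undistorted; by Proposition \ref{prop:finite}(3) every element of $A$ therefore has finite conjugacy class no matter what $\rho$ is. Thus your $A_0$ always has full rank and your dichotomy never discriminates. You also conflate ``finite conjugacy class'' with ``central'': in $\Gamma=\ZZ^2\rtimes\{\pm I\}$ every element of $\ZZ^2$ has conjugacy class of size at most $2$, yet $Z(\Gamma)$ is trivial, $\Gamma$ is neither FZ nor virtually cyclic, and it has undistorted elements. Your argument as written would not exclude this group.

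The missing idea is that the obstruction lives \emph{outside} $A$. If $\rho(\alpha)\ne 1$, the paper shows (using Proposition \ref{prop:finite}(4) and undistortedness of $\Lambda$) that $\alpha$ commutes with no nontrivial element of $\ZZ^n$, so $w\mapsto w\alpha w^{-1}$ injects a rank-two subgroup $W\cong\ZZ^2$ into the single conjugacy class $[\alpha]$, with conjugation by the generators of $W$ moving norms by at most a constant. The contradiction is then isoperimetric: summing the curvature over balls shows that the number of ``exits'' (elements that can be pushed to a larger sphere by a bounded conjugation) on each sphere of $\Gamma$ is uniformly bounded (Proposition \ref{prop:finite-exit}), whereas the exhaustion of the embedded $\ZZ^2$ by the balls $B_\Gamma(m)$ must have unbounded boundary. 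Note this step is exactly where the hypothesis that $\kappa\ge 0$ holds on all of $\Gamma\setminus B(R)$ — rather than only on a finite-index subgroup as in Theorem \ref{thm:splitting1} — is used, since the problematic element $\alpha$ and its conjugacy class do not lie in $\Lambda$. Without some version of this counting argument (or another way to exclude crystallographic groups with nontrivial point group in rank $\ge 2$), the proof is incomplete.
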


\begin{proof}
By Theorem \ref{thm:CheegerGromoll}, we know $\Gamma$ is virtually abelian. Let $\ZZ^n$ be a finite index normal subgroup of $\Gamma$. We have the following short exact sequence
\[1\to \ZZ^n \to \Gamma\to K\to 1,\]
where $K$ is a finite group. We denote by $\Lambda$ the $\ZZ^n$ subgroup of $\Gamma$ from the short exact sequence.

We note that the natural representation $\Gamma\to \op{Aut}(\ZZ^n)=\GL_n(\ZZ)$ factors through $K\cong\Gamma/\Lambda\to \op{Aut}(\ZZ^n)$, hence the elements in the image of the adjoint representation $\rho:\Gamma\rightarrow \GL_n(\ZZ)$ all have finite order. In order to classify all possible groups, we divide them into the following three cases:

\begin{enumerate}
	\item $\rho$ is the trivial representation.
	\item $n=1$ and $\rho$ is non-trivial.
	\item $n\geq 2$ and $\rho$ is non-trivial.
\end{enumerate}

Clearly, case $(1)$ corresponds to $\op{FZ}$-groups and case $(2)$ implies that $\Gamma$ is virtually cyclic. In the rest of the proof, we show that case $(3)$ is impossible thus the theorem follows.

Suppose we are in case $(3)$. Let $\alpha\in\Gamma$ be an element such that $\rho(\alpha)$ is non-trivial. We claim that $\alpha$ does not commute with any non-trivial element in $\ZZ^n$. Since $\rho(\alpha)$ is non-trivial, there exist nontrivial elements $a, b$ of $\ZZ^n$ such that $\alpha a \alpha^{-1}=b$ where $a\neq b$. This means $\alpha$ does not commute with any powers of $a$. Hence, the set $\{a^k\alpha a^{-k}:k\in \ZZ\}$ is infinite. So then the conjugacy class $[\alpha]$ is infinite. On the other hand, $\Gamma$ is virtually $\ZZ^n$ so the inclusion $\ZZ^n\cong\Lambda\rightarrow \Gamma$ is a quasi-isometry, hence all non-trivial elements in $\Lambda$ are undistorted. For every non-trivial element $x\in \Lambda$, by $(4)$ of Proposition \ref{prop:finite} (apply when $N=\Gamma$), the centralizer $Z_\Gamma(x)$ is contained in $\Gamma_0$. Since $[\alpha]$ is infinite, $\alpha$ is not in $\Gamma_0$. Hence $\alpha\notin Z_\Gamma(x)$. In other words, $\alpha$ does not commute with any non-trivial element in $\ZZ^n$.

Let $a$ and $b$ be standard generators in $\ZZ^n$ such that they generate a subgroup $W<\ZZ^n$ that is isomorphic to $\ZZ^2$. We define a map $\Phi:W\rightarrow [\alpha]\subset\Gamma$ by $\Phi(w)=w\alpha w^{-1}$. Since $\alpha$ does not commute with any element in $\ZZ^n$, the map $\Phi$ is injective. Moreover for every $s\in \{a,a^{-1},b,b^{-1}\}$, by triangle inequality we have 
\begin{align}\Big||\Phi(w)|_\Gamma-|\Phi(ws)|_\Gamma\Big|=\Big||\Phi(w)|_\Gamma-|s\Phi(w)s^{-1}|_\Gamma\Big|\le 2|s|_\Gamma\le 2k,\label{eqn:sphere}\end{align}
where $k=\max\{|a|_\Gamma,|b|_\Gamma\}$.

Consider the conjugacy class $[\alpha]$, we form a graph $G_{\alpha}$ such that the vertices are the elements in $\Phi(W)$, we make an edge between any two vertices if they are conjugate by $a^{\pm 1}$ or $b^{\pm 1}$. Thus $\Phi$ induces a graph isomorphism between the standard Cayley graph of $\ZZ^2$ and $G_{\alpha}$. Such graph has large connectivity in the sense that the boundary of any set $A_m\supseteq B_{\ZZ^2}(m)$ satisfies $|\partial A_m|\rightarrow \infty$ as $|A_m|\rightarrow \infty$. We recall here that given a subset $A$ in a graph $G$, the boundary $\partial A$ of $A$ consists of points in $A$ that are adjacent to points in $G-A$. We show this is impossible in $\Gamma$, which leads to a contradiction. The idea is very similar to the notion of separation profile introduced in \cite{BST18}.

\begin{definition}
We define the \emph{exiting time} $\tau$ of any element $\gamma$ to be
$$\tau(\gamma)=\inf_{\sigma\in \Gamma}\{|\sigma|\;:\;|\sigma\gamma\sigma^{-1}|>|\gamma|\}$$
If there is no such element $\sigma$, we define it to be $\infty$. We say an element $\gamma$ is \emph{an exit} if $\tau(\gamma)=1$ and a \emph{$k$-step exit} if $\tau(\gamma)\leq k$.
\end{definition}
We need the following proposition.
\begin{proposition}\label{prop:finite-exit}
Fix an integer $k>0$. If $\Gamma$ satisfies $\kappa(x)=0$ for all $|x|>R$, then there is $L>0$ such that the number of $k$-step exits on each sphere is at most $L$.\end{proposition}

\begin{proof}
By definition, any $k$-step exit is conjugate to an exit by an element in the finite set $B_{\Gamma}(k)$. So we only need to show the number of exits on each sphere is uniformly bounded.

Consider the ball $B(m)$ where $m>|R|$, summing all the curvatures we obtain
$$\sum_{x\in B(m)}\kappa(x)=\frac{1}{|S|}\sum_{(s,x)\in S\times B(m)}(|x|-|sxs^{-1}|)$$
where the left hand side is independent of $m$ and the terms on the right hand side are canceled on the pairs $(s,x)$ and $(s^{-1},sxs^{-1})$ except when $x$ is near boundary, or more precisely when $(s,x)$ belongs to
$$Y=\{(s,x)\in S\times \big(S(m-1)\union S(m)\big):|sxs^{-1}|>m\} .$$
Since each pair in $Y$ either contributes $-1$, or $-2$ in the sum, we can estimate further
\begin{align*}
\sum_{x\in B(m)}\kappa(x)&=\frac{1}{|S|}\sum_{(s,x)\in Y}(|x|-|sxs^{-1}|)\leq -\frac{|Y|}{|S|}
\end{align*}
and so $|Y|\leq L$ for some uniform constant $L$.

Now if $\gamma\in S(m)$ is an exit, then by definition, there exists $s\in S$ such that $(s,\gamma)\in Y$. This shows the total number of exits on $S(m)$ is bounded by the cardinality of $Y$, hence is uniformly bounded by $L$.
\end{proof}

We continue the proof of Theorem \ref{thm:abelian}. We look at the sequence of subgraphs $G_{\alpha}^{(m)}=\Phi(W)\cap B_{\Gamma}(m)$. It is an exhaustion of $G_\alpha$. We claim $|\partial G_{\alpha}^{(m)}|$ is uniformly bounded. For any element $g\in \partial G_{\alpha}^{(m)}$, by definition, there exists $u\in \{a^{\pm 1}, b^{\pm 1}\}$ such that $|ugu^{-1}|>m$. In particular, $g$ is a $k$-step exit where we recall that $k=\max\{|a|_\Gamma,|b|_\Gamma\}$. Moreover, by inequality (\ref{eqn:sphere}), $m-2k\le |g|_\Gamma \le m$. Thus, by Proposition \ref{prop:finite-exit}, $|\partial G_{\alpha}^{(m)}|$ is uniformly bounded.

By pulling back the sequence $\{G_{\alpha}^{(m)}\}$, we obtain an exhaustion $\{A_m=\Phi^{-1}(G_{\alpha}^{(m)})\}$ on $\ZZ^2$, with $|\partial A_m|$ are uniformly bounded. It is a contradiction. Hence the case (3) is impossible. This completes proof of Theorem \ref{thm:abelian}.
\end{proof}

Finally, we prove the second part of Theorem \ref{thm:FZ}.

\begin{theorem}\label{thm:FZconverse}
Let $\Gamma$ be a finitely generated group. If $\Gamma$ is either $\op{FZ}$ or virtually cyclic, then there exists a finite generating set $S$ such that $\kappa(x)=0$ for all $x\in \Gamma\backslash B(3)$.
\end{theorem}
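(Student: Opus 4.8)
The plan is to treat the two hypotheses separately, and in fact to get the stronger conclusion $\kappa \equiv 0$ in the $\op{FZ}$ case, while reducing the virtually cyclic case to a single model computation on $D_\infty$.

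\textbf{The $\op{FZ}$ case.} Suppose $Z(\Gamma)$ has finite index. First I would note that \emph{every} conjugacy class is then finite: for $x\in\Gamma$ the centralizer $Z_\Gamma(x)\supseteq Z(\Gamma)$, so $|[x]|=[\Gamma:Z_\Gamma(x)]\le[\Gamma:Z(\Gamma)]<\infty$. Starting from any finite symmetric generating set $S_0$, I would set $S=\bigcup_{s\in S_0}[s]$, the union of the finitely many (finite) conjugacy classes of the generators. Then $S$ is finite, symmetric (since $[s^{-1}]=[s]^{-1}$ and $S_0$ is symmetric), contains $S_0$ so generates, and is by construction invariant under conjugation by every $g\in\Gamma$. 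Hence each conjugation is an automorphism permuting $S$, thus an isometry of the word metric; in particular $|sxs^{-1}|=|x|$ for all $s\in S$ and all $x$, so $\kappa_S\equiv 0$. This also disposes of all finite groups, which are $\op{FZ}$.

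\textbf{Reduction of the virtually cyclic case and setup.} An infinite virtually cyclic $\Gamma$ has a finite normal $F$ with $\Gamma/F\cong\ZZ$ or $\Gamma/F\cong D_\infty$. I would first observe that the $\ZZ$-case is already $\op{FZ}$: a lift $t$ of a generator acts on the finite group $F$ by an automorphism of finite order $d$, so $t^{d}$ centralizes $F$ and commutes with $t$, hence $t^{d}\in Z(\Gamma)$ generates a finite-index subgroup. So I may assume $\Gamma/F\cong D_\infty$. Let $\Gamma^{+}$ be the index-two subgroup over the translation part $\langle\bar r\rangle$, with orientation homomorphism $\chi:\Gamma\to\{\pm1\}$, $\ker\chi=\Gamma^{+}$. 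Since $\Gamma^{+}$ is of $\ZZ$-type it is $\op{FZ}$, and using the virtually cyclic structure I would fix a $\Gamma$-normal infinite cyclic subgroup $\langle z\rangle$ of finite index with $z$ central in $\Gamma^{+}$; because $\langle z\rangle\cong\ZZ$ is normal, conjugation sends $z\mapsto z^{\chi(g)}$, and in particular a reflection lift inverts $z$.

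\textbf{Generating set and cancellation.} The model to keep in mind is $D_\infty=\langle r\rangle\rtimes\langle a\rangle$ with $S=\{r,r^{-1},a\}$: there $|r^{n}|=|n|$, $|r^{n}a|=|n|+1$, translations satisfy $rr^{n}r^{-1}=r^{n}$ and $ar^{n}a=r^{-n}$, while a reflection obeys $r(r^{n}a)r^{-1}=r^{n+2}a$ and $r^{-1}(r^{n}a)r=r^{n-2}a$ with $a$-conjugation length-preserving, so the $\pm2$ shifts cancel and $\kappa(r^{n}a)=0$ once $|n|\ge2$. In general I would take $S=\{z,z^{-1}\}\cup E$ with $E$ a finite symmetric set of short elements (containing $F$ and a reflection) generating $\Gamma$, and prove a geodesic lemma: for a finite set $U$ of $\langle z\rangle$-coset representatives there are $N_0$ and $\psi:U\to\ZZ$ with $|z^{n}u|_S=|n|+\psi(u)$ for $|n|\ge N_0$. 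Granting this, for $x=z^{n}u$ with $n$ large one writes $sxs^{-1}=z^{\chi(s)n+k(s,u)}u'(s)$ with $k(s,u)$ bounded, so $|sxs^{-1}|=n+\chi(s)k(s,u)+\psi(u'(s))$, and summing gives
\[
\sum_{s\in S}\big(|x|-|sxs^{-1}|\big)=|S|\,\psi(u)-\sum_{s\in S}\chi(s)\,k(s,u)-\sum_{s\in S}\psi\big(u'(s)\big).
\]
Since conjugation preserves $\chi$, each $u'(s)$ lies in the same orientation class as $u$; choosing $S$ symmetric and adapted to the $\langle z\rangle$-action so that the contributions of $z$ and $z^{-1}$ cancel the $\pm$ shift and the $E$-terms cancel in symmetric pairs, the drift sum vanishes and $\sum_{s}\psi(u'(s))=|S|\psi(u)$, whence $\kappa_S(x)=0$. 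The case $n<0$ is symmetric, and the finitely many $x$ with small $z$-coordinate (like $a,r^{\pm1}a$ in the model) are absorbed into $B(3)$ by taking the bound large enough.

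\textbf{Main obstacle.} The real work lies in two places. First, the geodesic lemma: I must show that with the chosen $S$ geodesics to far points run along the $z$-axis with slope exactly one, so that $|z^{n}u|_S=|n|+\psi(u)$ holds with a well-defined $\psi$; this forces a careful choice of $z$ as an efficient, undistorted axis generator, ruling out shortcuts through the finite decorations. Second, I must arrange $E$ so that the bounded drift- and coset-sums above cancel for \emph{every} coset $u$. The cleanest route I foresee is to make the orientation-preserving part of $E$ a union of finite conjugacy classes and to pair each reflection with its inverse, reducing both sums to the model cancellation; verifying that this can always be achieved is the crux of the argument.
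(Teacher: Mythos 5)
Your $\op{FZ}$ case is correct and is exactly the paper's argument (conjugation closure of a finite generating set, so that conjugations become isometries and $\kappa_S\equiv 0$), and your reduction of the $\ZZ$-type virtually cyclic groups to the $\op{FZ}$ case also matches the paper. The problem is the $D_\infty$-type case, where your proof is only a plan: the two statements that carry the entire weight of that case are left unproved, and you say so yourself. Concretely, (i) the geodesic lemma $|z^{n}u|_S=|n|+\psi(u)$ for $|n|\ge N_0$ is asserted but not established --- it requires at least that $|z^{n}|_S=|n|$ exactly (no shortcuts through $E$, e.g.\ no element of $E$ whose square is a higher power of $z$) and that the correction term $\psi(u)$ stabilizes, neither of which follows from merely calling $z$ ``efficient''; and (ii) the two cancellation identities $\sum_{s}\chi(s)k(s,u)=0$ and $\sum_{s}\psi(u'(s))=|S|\psi(u)$ must hold for \emph{every} coset representative $u$, in particular for the reflection cosets where the $\pm2$ drift appears, and you give no concrete $E$ for which this is verified. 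Since you explicitly identify these as ``the crux of the argument,'' what you have is a reduction of the theorem to two unproven lemmas, not a proof.

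For comparison, the paper closes exactly this gap by a different choice of generating set: it takes $S=\varphi^{-1}(\{1,a,b\})\setminus\{1\}$, the full preimage of the standard generators of $D_\infty$ under the quotient map $\varphi:\Gamma\to D_\infty$. Using a normal form $g_i\,w(\bar a,\bar b)$ it shows $\varphi$ is norm-preserving outside $B(1)$, after which the curvature sum over $S$ collapses term-by-term onto the curvature of $D_\infty$ with its standard generators, which vanishes outside $B(1)$ by your own model computation. This choice makes your geodesic lemma and both cancellation identities automatic, so if you want to complete your argument along your own lines, replacing $\{z^{\pm1}\}\cup E$ by the full preimage of $\{1,a,b\}$ is the natural fix.
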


\begin{proof}

If $\Gamma$ is finitely generated $\op{FZ}$, we take any finite, symmetric generating set $S_0$. Consider its conjugation closure $S=\bigcup\limits_{\gamma\in S_0}[\gamma]$. For every $\gamma\in \Gamma$, the point stabilizer under the conjugation action of $\Gamma$ at $\gamma$ is exactly the centralizer $Z_\Gamma(\gamma)$ that contains the center. Thus, $|[\gamma]|=[\Gamma:Z_\Gamma(\gamma)]\leq [\Gamma:Z(\Gamma)]<\infty$. Hence the conjugacy class $[\gamma]$ is finite. It follows that $S$ is also finite. Moreover $S$  is symmetric and generates $\Gamma$. We claim that under this new generating set $S$, the curvature $\kappa_S(x)=0$ for all $x\in\Gamma$. It suffices to show $|\gamma x\gamma^{-1}|=|x|$ for all $x, \gamma\in \Gamma$. By symmetry, it is equivalent to show $|\gamma x\gamma^{-1}|\leq |x|$ for all $x, \gamma\in \Gamma$. Now given $x,\gamma\in \Gamma$ arbitrary, suppose $x$ is expressed in shortest word $x_1...x_k$ for $x_i\in S$, then $\gamma x\gamma^{-1}=(\gamma x_1\gamma^{-1})...(\gamma x_k\gamma^{-1})$ where each $\gamma x_i\gamma^{-1}\in S$ since $S$ is closed under conjugation. This implies that $|\gamma x\gamma^{-1}|\leq |x|$ thus the claim follows.

If $\Gamma$ is virtually cyclic, then by the classification theorem \cite[Lemma 11.4]{Hem04}, $\Gamma$ is either finite (which is $\op{FZ}$), finite extension of $\ZZ$, or finite extension of dihedral group $D_{\infty}=(\ZZ/2) * (\ZZ/2)$. If $\Gamma$ is a finite extension of $\ZZ$, we have the short exact sequence
$$1\rightarrow F\rightarrow \Gamma\rightarrow \ZZ\rightarrow 1,$$
where $F$ is a finite group. This sequence always splits hence $\Gamma\simeq F\rtimes \ZZ$. Let $a$ be a generator of $\ZZ$. Since $F$ is finite, there exists an integer $N>0$ such that $a^N$ acts trivially on $F$. Then the cyclic group generated by $a^N$ is a subgroup of the center which has finite index. This implies $\Gamma$ is $\op{FZ}$ hence the proposition follows.

If $\Gamma$ is a finite extension of the infinite dihedral group, then we have the following short exact sequence
$$1\rightarrow F\rightarrow \Gamma\xrightarrow{\varphi} D_{\infty}\rightarrow 1,$$
where $F$ is a finite group. Let $F=\{1,g_1,...g_k\}$ and let $a, b$ be the generators of $D_{\infty}$ viewed as $\ZZ/2\ast \ZZ/2$. We take any lifts $\bar{a}$ and $\bar b$ in $\Gamma$ such that $\varphi(\bar a)=a$ and $\varphi(\bar b)=b$. Then any element in $\Gamma$ can be written uniquely as $g_i$, or $w(\bar a,\bar b)$, or $g_iw(\bar a,\bar b)$ where $w(\bar a,\bar b)$ is an alternating word formed by $\bar a$ and $\bar b$ that belongs to the following set
$$W=\{(\bar a\bar b)^m, \bar a(\bar b\bar a)^{m-1}, (\bar b\bar a)^m, \bar b(\bar a\bar b)^{m-1}\;|\;m\in \ZZ^+ \}.$$
To see this we note that for every $i$, there exist $j$ and $m$ such that $\bar a g_i = g_j \bar a$ and $\bar b g_i = g_m\bar b$. More over for every $i$, there is $s$ and $t$ such that $\bar a^2= g_s \bar a$ and $\bar b^2 = g_t \bar b$. Thus, we can write any element if $\Gamma$ in the form $g_iw(\bar a,\bar b)$ for some $i$ and $w(\bar a, \bar b)\in W$. It  is clear that under the map $\varphi$, both $w(\bar a,\bar b)$ and $g_iw(\bar a,\bar b)$ send to the word $w(a,b)$--the same word as $w(\bar a,\bar b)$ by replacing $\bar a$, $\bar b$ by $a$, $b$ respectively.

Now we set $S=\varphi^{-1}(\{1,a,b\})\backslash \{1\}$, or more explicitly,
$$S=\{g_1,...,g_k,\bar a, g_1\bar a,...,g_k\bar a, \bar b, g_1\bar b,...,g_k\bar b\}$$
with no repetition. It is clear that $S$ is a finite, symmetric generating set of $\Gamma$. We claim $\varphi$ is norm-preserving when restricted to $\Gamma\backslash S$. Pick any $\gamma\in \Gamma\backslash S$, write $\gamma$ in shortest word $s_1...s_l$ where $s_i\in S$, in particular, $|\gamma|=l$. Then $\varphi(\gamma)=\varphi(s_1)...\varphi(s_l)$ where by construction each $\varphi(s_i)$ is either $1$, $a$ or $b$. This shows that $|\gamma|\geq |\varphi (\gamma)|$ for all $\gamma \in \Gamma$. On the other hand, since $\varphi(\gamma)$ is not trivial, it can be expressed in shortest alternating word $w(a,b)$, then there exists $g_i\in F$ such that $\gamma=g_iw(\bar a,\bar b)$. Now if we spell $w(\bar a,\bar b)$ out, the first letter (either $\bar a$ or $\bar b$) can be combined with the $g_i$ in the front which still lies in $S$, and this way it produces a spelling of length same as $|w(a,b)|$, so this implies that $|\gamma|\leq |\varphi(\gamma)|$ provided $\gamma\in \Gamma\backslash S$. Thus, we conclude $|\gamma|=|\varphi(\gamma)|$ for all $|\gamma|>1$.

Finally we compare the curvatures in $(\Gamma,S)$ with that in $(D_\infty,\{a,b\})$ under the homomorphism $\varphi$. It is clear $\kappa(y)=0$ for all $|y|>1,y\in D_\infty$. Now for any $x\in \Gamma$ with $|x|>3$, the curvature at $x$ by definition satisfies
\begin{align*}
|S|\cdot\kappa(x)&=\sum_{i=1}^k (|x|-|g_ix g_i^{-1}|)+\sum_{i=1}^k (|x|-|(g_i\bar a)x (g_i\bar a)^{-1}|)\\
&+\sum_{i=1}^k (|x|-|(g_i\bar b)x (g_i\bar b)^{-1}|)+(|x|-|\bar a x \bar a^{-1}|)+(|x|-|\bar b x \bar b^{-1}|).
\end{align*}
Notice by triangle inequalities, all the norms appearing in the above sum is at least $1$, so by the fact $\varphi$ is norm preserving, we can replace all norms by the norms of the corresponding images under $\varphi$. So we obtain the following
\begin{align*}
|S|\cdot\kappa(x)&=\sum_{i=1}^k (|\varphi(x)|-|\varphi(x)|)+\sum_{i=1}^k (|\varphi(x)|-|a\varphi(x) a^{-1}|)\\
&+\sum_{i=1}^k (|\varphi(x)|-|b\varphi(x) b^{-1}|)+(|\varphi(x)|-|a\varphi(x) a^{-1}|)\\
&+(|\varphi(x)|-|b\varphi(x) b^{-1}|)\\
&\\
&=2(k+1)\kappa(\varphi(x))=0.\\
\end{align*}
Thus $\kappa(x)=0$ for all $x\in \Gamma\backslash B(3)$, and this completes the proof.
\end{proof}

\subsection{Corollaries and applications}\label{subsec:cor-and-appl}
We have seen that both Theorem \ref{thm:CheegerGromoll} and Theorem \ref{thm:FZ} require the existence of an undistorted element in the group. It is known that infinite hyperbolic groups always have at least one such element, so our Theorem \ref{thm:FZ} immediately implies the following corollary.
\begin{corollary}\label{cor:hyp}
If $\Gamma$ is a non-elementary $\delta$-hyperbolic group, then for any choice of generating set, the set of elements with negative curvature is infinite.\qed
\end{corollary}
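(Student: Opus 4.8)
The plan is to argue by contradiction using the structural dichotomy already established in Theorem \ref{thm:FZ} (more precisely, its first half, Theorem \ref{thm:abelian}). Fix any finite symmetric generating set $S$ and suppose, toward a contradiction, that only finitely many elements of $\Gamma$ have negative curvature. Then all such elements lie in some ball $B_\Gamma(R)$, so that $\kappa(x)\ge 0$ for every $x\notin B_\Gamma(R)$. This is exactly the curvature hypothesis of Theorem \ref{thm:abelian}, so the strategy reduces to checking that its remaining hypothesis (an undistorted element) holds and that neither of its two possible conclusions is compatible with $\Gamma$ being non-elementary hyperbolic.

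Next I would supply the undistorted element. Since $\Gamma$ is non-elementary it is in particular infinite, and every infinite $\delta$-hyperbolic group contains an element $w$ of infinite order; in a hyperbolic group the cyclic subgroup generated by such an element is quasi-isometrically embedded, hence $w$ is undistorted in the sense of Section \ref{sec:preliminary}. With both hypotheses of Theorem \ref{thm:abelian} now in force, I conclude that $\Gamma$ is either virtually cyclic or an $\op{FZ}$-group. The virtually cyclic case is excluded immediately, since by definition a non-elementary hyperbolic group is not virtually cyclic. For the $\op{FZ}$ case, if $Z(\Gamma)$ has finite index then $\Gamma$ is center-by-finite and therefore virtually abelian (e.g.\ by Schur's theorem, the commutator subgroup is finite). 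A virtually abelian group is amenable and contains no non-abelian free subgroup, whereas a non-elementary hyperbolic group does contain a copy of the free group $F_2$; equivalently, a virtually abelian hyperbolic group cannot contain $\ZZ^2$ and so would itself have to be virtually cyclic. Either way the $\op{FZ}$ conclusion forces $\Gamma$ to be elementary, contradicting the hypothesis. Hence the set of negatively curved elements cannot be finite.

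I do not expect a serious obstacle here: the corollary is essentially the contrapositive of Theorem \ref{thm:abelian} packaged with two standard facts about hyperbolic groups. The only step requiring a little care is ruling out the $\op{FZ}$ alternative, which I handle by noting that a finite-index center makes $\Gamma$ virtually abelian and hence, for a hyperbolic group, elementary. The existence of an infinite-order (undistorted) element and the presence of $F_2$ in a non-elementary hyperbolic group are both classical inputs that I would simply cite rather than reprove.
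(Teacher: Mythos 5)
Your proposal is correct and follows essentially the same route as the paper, which derives the corollary directly from Theorem \ref{thm:abelian} together with the standard fact that an infinite hyperbolic group contains an undistorted (infinite-order) element; you have simply written out the details the paper leaves implicit, including the correct elimination of the $\op{FZ}$ alternative via Schur's theorem and the absence of $\ZZ^2$ in hyperbolic groups. No gaps.
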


A similar result to the above corollary also holds for nilpotent groups. In fact, with a bit more careful study on the nilpotent structure, we obtain a much stronger statement as follows,

\begin{theorem} \label{thm:nil}If $H$ is a torsion-free nilpotent group which is not virtually abelian, then for any finite generating set, there are two sequences of elements $(x_i)$, $(y_i)$ in $H$ both tending to infinity, which satisfy $\kappa(x_i)>0$ and $\kappa(y_i)<0$ for all $i\in \NN$.
\end{theorem}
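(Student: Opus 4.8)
=== BEGIN PROOF PROPOSAL ===

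\textbf{The plan} is to exploit the fact that a torsion-free nilpotent group that is not virtually abelian must have nontrivial lower central series behavior, and to produce the two sequences by playing the center against the noncommutativity. First I would recall the structure: if $H$ is torsion-free nilpotent but not virtually abelian, then $H$ is not $\op{FZ}$ and is not virtually cyclic (a virtually cyclic torsion-free group is either trivial, $\ZZ$, or $D_\infty$, and the latter has torsion). The key idea is to run the two main splitting theorems contrapositively. If the curvature were eventually non-negative (i.e.\ $\kappa(x)\ge 0$ for all $x$ outside some ball), then since $H$ is nilpotent and infinite it contains an undistorted element—indeed in a torsion-free finitely generated nilpotent group any nontrivial central element is undistorted—so Theorem \ref{thm:abelian} would force $H$ to be $\op{FZ}$ or virtually cyclic, contradiction. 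This immediately yields the sequence $(y_i)$ with $\kappa(y_i)<0$ tending to infinity: negative curvature must persist at arbitrarily large norms.

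\textbf{The harder direction} is producing the positive-curvature sequence $(x_i)$, since the splitting theorems only give lower-bound/obstruction information. Here I would use the summation-by-annulus technique from part (2) of Proposition \ref{prop:finite}, or rather its local form as in Proposition \ref{prop:finite-exit}. The idea is that the total curvature summed over any annulus telescopes to a bounded boundary term, so one cannot have $\kappa$ strictly negative on a set of full density in each large sphere without the boundary contributions growing—but nilpotent groups have polynomial growth, so the spheres $S(n)$ grow only polynomially. I would argue that if $\kappa(x)\le 0$ held for \emph{all} large $x$, then summing over $B(m)$ gives $\sum_{x\in B(m)}\kappa(x)\le 0$ while the telescoping identity bounds this sum below by $-C\cdot|S(m)|$, which is polynomially bounded; combined with the fact that each negative contribution is a negative integer, a counting argument on the number of ``exits'' (elements whose conjugates leave the current sphere) shows that conjugacy must also pull infinitely many elements \emph{inward}, forcing $\kappa>0$ somewhere on arbitrarily large spheres. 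Concretely, since $H$ is not virtually abelian it has infinitely many infinite conjugacy classes intersecting arbitrarily large balls, and each such class, being distorted by part (3) of Proposition \ref{prop:finite}, must re-enter smaller spheres infinitely often, contributing positive $\Delta(s,x)$ terms at arbitrarily large norms.

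\textbf{The main obstacle} I anticipate is making the positive-curvature sequence genuinely \emph{tend to infinity} rather than merely existing at unbounded norms, and ensuring the argument is independent of the generating set. For the latter, the generating-set-independence comes for free because the whole argument is driven by the intrinsic dichotomy of Theorem \ref{thm:abelian} and by distortion (part (3) of Proposition \ref{prop:finite}), both of which are generating-set-independent properties. For the former, I would make the counting precise: fix a growing annulus $A(r,2r)$, use the telescoping identity \eqref{eq:sum-simplified}, and observe that the inward-moving conjugation pairs (which force positive curvature) and outward-moving pairs nearly balance except near the boundary; since the boundary cost is controlled by $|S(r)|$ which grows polynomially, while the number of distorted conjugacy-class representatives passing through $A(r,2r)$ grows at a comparable or faster polynomial rate, the pigeonhole forces a positive-curvature element in each annulus for all large $r$. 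This gives the desired sequence $x_i$ with $|x_i|\to\infty$. The subtlety is quantifying "distorted conjugacy classes pass through every large annulus," which I would establish by taking a noncentral element $\alpha$ with infinite conjugacy class and analyzing the orbit $\{w\alpha w^{-1}\}$ exactly as in the proof of Theorem \ref{thm:abelian}, where the $\ZZ^2$-graph structure guarantees elements of $[\alpha]$ at every large norm.

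=== END PROOF PROPOSAL ===
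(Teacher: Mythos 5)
Your first half is fine and matches the paper: since a torsion-free $\op{FZ}$ group is abelian (Schur) and a torsion-free virtually cyclic group is trivial or $\ZZ$, the hypothesis rules out both alternatives of Theorem \ref{thm:abelian}, and an infinite finitely generated nilpotent group does contain an undistorted element, so negative curvature must occur at arbitrarily large norms. One correction, though: your justification that ``any nontrivial central element is undistorted'' is false --- in the Heisenberg group the central generator $z=[a,b]$ satisfies $|z^{n^2}|=O(n)$ and is distorted. The undistorted element has to come from the abelianization (an element with infinite order image in $H/[H,H]$), not from the center; in fact the distortion of central commutators is exactly what the paper exploits in the other half of the proof.

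The positive-curvature half has a genuine gap. First, you invoke part (3) of Proposition \ref{prop:finite} to say infinite conjugacy classes are distorted, but that proposition assumes $\kappa\ge 0$ outside a ball, whereas your contradiction hypothesis is $\kappa\le 0$ outside a ball; the proposition simply does not apply. Second, the counting scheme does not close: under $\kappa\le 0$ on an annulus, the identity \eqref{eq:sum-simplified} only says the outward-moving boundary pairs ($Y_2$) outweigh the inward-moving ones ($Y_1$), which is perfectly consistent and yields no contradiction from polynomial growth of spheres alone; and producing a single inward-moving pair $(s,x)$ with $\Delta(s,x)>0$ does not make $\kappa(x)>0$, since $\kappa$ is an average over all of $S$. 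The paper's actual argument is different and you would need its two key mechanisms: (a) under $\kappa\le 0$, an ``exit'' conjugated outward is again an exit, so one builds a chain $u_{n+1}=s_{i_{n+1}}u_ns_{i_{n+1}}^{-1}$ with $|u_n|\ge|u_0|+n$; and (b) starting from $u_0$ conjugate to some $w\in H_3\setminus Z(H)$, each conjugation multiplies by a commutator $z_i=[s_i,u_0]$ lying in the center and satisfying $z_i^{n^2}=[s_i^n,u_0^n]$, hence distorted, so $u_n=z_1^{\ell_1}\cdots z_k^{\ell_k}u_0$ has $|u_n|\le n/2+O(1)$. The linear lower bound against the sublinear upper bound is the contradiction. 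Neither (a) nor (b) appears in your sketch, and without them the positive-curvature sequence is not produced.
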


\begin{proof}
An infinite finitely generated nilpotent group always contains an undistorted element. By Theorem \ref{thm:abelian}, there is a sequence $(y_i)$ of negative curvature. To show the existence of $(x_i)$, we suppose, for the sake of contradiction, that there are only finitely many elements in $H$ having positive curvature, i.e.~there exists $R>0$ such that $\kappa(x)\leq 0$ for all $x\in H\backslash B(R)$.

Let $H=H_m\triangleright...\triangleright H_2\triangleright H_1=\{1\}$ be the lower central series of $H$ which satisfies $[H,H_i]=H_{i-1}$ for all integers $2\leq i\leq m$. We note that $m\geq 3$ because $H$ is not abelian. We also have $H_2\subset Z(H)$ and $H_3\backslash Z(H)\neq \varnothing$. We fix an arbitrary element $w\in H_3\backslash Z(H)$ and consider the conjugacy class $[w]$. Since $w\in H_3\backslash Z(H)$, there are $h\in H$ and $1\neq z\in Z(H)$ such that $hwh^{-1}w^{-1}=z$. It follows that $h^nwh^{-n}=z^nw$, for all $n\in \NN$. Since $H$ is torsion free, we have $\langle z\rangle$ is a free cyclic subgroup. Hence, $[w]$ is infinite.

Let $S=\{s_1,...,s_k\}$ be  a symmetric generating set. We recall that an element $x$ is called an exit if there exists $s\in S$ such that $|sxs^{-1}|>|x|$. We pick an exit $u_0\in[w]$ with $|u_0|>R$ and denote $z_i=[s_i,u_0]$ for $1\leq i\leq k$. Let $g\in H$ such that $u_0=gwg^{-1}$. Then
$$z_i=[s_i,u_0]=s_igwg^{-1}s_i^{-1}gw^{-1}g^{-1}=[s_ig,w][w,g],$$
for every $i=1,\dots, k$. Since $[H,w] \subset H_2 \subset Z(H)$, we have $z_i\in Z(H)$ for every $i=1,\dots , k$. This implies the following bracket relation
$$z_i^{n^2}=[s_i^n,u_0^n],$$
for every $i=1,\dots, k$. It follows that all $z_i$ are distorted. 

Next, we construct a sequence $(u_n)$ of exits and a sequence $(s_{i_n})$ such that for all $n\in \NN$
\begin{enumerate}
\item $s_{i_n}\in S$, 
\item $u_{n+1}=s_{i_{n+1}}u_{n}s_{i_{n+1}}^{-1}$,
\item $|u_{n+1}|>|u_n|$.
\end{enumerate}
Suppose, by induction, that we can construct $u_0,\dots, u_n$ and $s_{i_1},\dots, s_{i_n}$. Since $u_n$ is an exit, there is an element $s_{i_{n+1}}\in S$ such that $|s_{i_{n+1}}u_{n}s_{i_{n+1}}^{-1}|>|u_n|$. We set $u_{n+1}=s_{i_{n+1}}u_{n}s_{i_{n+1}}^{-1}$. Since $\kappa(u_{n+1})\le 0$ and $|s_{i_{n+1}}^{-1}u_{n+1}s_{i_{n+1}}|=|u_n|<|u_{n+1}|$, $u_{n+1}$ must be an exit. This justifies the construction. As a consequence of property (3), we have $|u_n|\geq |u_0|+n$. In particular, $\liminf\limits_{n\rightarrow \infty}\frac{|u_n|}{n}\geq 1$.

On the other hand, we can write $u_n=s_{i_n}...s_{i_1}u_0s_{i_1}^{-1}...s_{i_n}^{-1}=z_{i_1}...z_{i_n}u_0$, where $z_{i_j}$ belongs to the set $\{z_1,...,z_k\}$ for every $j\in \NN$. Since $z_i\in Z(H)$ for every $i=1,\dots, k$, we can write further in repeated powers that $u_n=z_1^{\ell_1}...z_{k}^{\ell_k}u_0$ with $\sum\limits_{i=1}^k \ell_i=n$. So we have
\begin{align}\label{ineq:nil1}
|u_n|&\leq |z_1^{\ell_1}|+...+|z_k^{\ell_k}|+|u_0|.
\end{align}
Since $z_i$ are all distorted, we have that for every $\epsilon>0$, there exists $N(\epsilon)$ such that if $\ell>N$, then $|z_i^\ell|<\epsilon|z_i|\ell$ for all $i$. If $\ell\leq N$, we have the trivial bound $|z_i^\ell|\leq N|z_i|$. Thus, combining the two, we have for all $i$ and $\ell,$ that
\begin{align}\label{ineq:nil2}
|z_i^\ell|\leq \epsilon|z_i|\ell+N|z_i|.
\end{align}
Let $L=\max\{|z_1|,...,|z_k|\}$, we choose $\epsilon=\frac{1}{2L}$ and let $N=N(\frac{1}{2L})$. By inequalities  \ref{ineq:nil1} and \ref{ineq:nil2}, we obtain
\begin{align*}
|u_n|-|u_0|&\leq (\frac{\ell_1}{2}+NL)+...+(\frac{\ell_k}{2}+NL)\\
&=\frac{n}{2}+kNL.
\end{align*}
it follows that $\limsup\limits_{n\rightarrow \infty}\frac{|u_n|}{n}\leq \frac{1}{2}$, which contradicts with the fact $\liminf\limits_{n\rightarrow \infty}\frac{|u_n|}{n}\geq 1$ we obtained before. Therefore, there are infinitely many elements in $H$ of positive curvature.
\end{proof}

We remark that it is shown in \cite{NDK} that, for the Heisenberg group, when equipped with its standard generators, the set of points with positive curvature and the set of points with negative curvature both have positive density. So the question is whether this mixed curvature behavior will occur in a more general context. Theorem \ref{thm:nil} shows that the answer is yes for the class of torsion-free nilpotent groups, and is independent of the generating set. This is an analog to the fact that non-abelian nilpotent Lie groups with left invariant metrics have mixed signs of Ricci curvature \cite[Theorem 2.4]{Milnor76}.

As another application of our splitting theorems, we can compare groups that arise from the two different notions of curvature, that is,
finitely generated groups of non-negative curvature and the fundamental groups of closed manifolds with non-negative Ricci curvature. In particular, we get the following correspondence.

\begin{proposition}Let $\Gamma$ be a finitely generated group. The followings are equivalent.
\begin{enumerate}
\item \label{1}$\Gamma$ is isomorphic to the fundamental group of a closed Riemannian manifold of non-negative sectional curvature.
\item \label{2}$\Gamma$ is isomorphic to the fundamental group of a closed Riemannian manifold of non-negative Ricci curvature.
\item \label{3}$\Gamma$ is isomorphic to the fundamental group of a closed Riemannian manifold of positive Ricci curvature.
\item \label{4}$\Gamma$ has an undistorted element, and there is a generating set $S$ that makes the curvature $\kappa_S$ restricted to a finite index subgroup vanish everywhere outside a ball.
\item \label{5}There are a finite group $F$, a crystallographic group $\Lambda$ and a short exact sequence
$$1\to F\to \Gamma\to \Lambda \to 1.$$
\item There are a finite group $K$, an integer $n$, and a short exact sequence
$$1\to \ZZ^n\to \Gamma\to K\to 1.$$
\end{enumerate}
\end{proposition}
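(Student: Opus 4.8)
The plan is to route all six statements through a single hub, namely the condition that $\Gamma$ is finitely generated and \emph{virtually abelian}, and to prove that each of (1)--(6) is equivalent to it. Two implications are free from curvature comparison alone: non-negative sectional curvature implies non-negative Ricci curvature, so (1)$\Rightarrow$(2), and positive Ricci curvature implies non-negative Ricci curvature, so (3)$\Rightarrow$(2). The substantive geometric input on this side is that (2) implies virtual abelianness, which is precisely the group-level consequence of the Cheeger--Gromoll splitting theorem \cite{CG71} recorded in the introduction: one lifts to the universal cover, splits off the maximal Euclidean factor, and applies Bieberbach's theorem to the resulting cocompact action to conclude that a closed manifold with $\op{Ric}\ge 0$ has virtually abelian fundamental group.

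Next I would dispatch the purely group-theoretic spokes (4), (5) and (6). For (6), a group fitting into $1\to\ZZ^n\to\Gamma\to K\to 1$ with $K$ finite is evidently virtually abelian; conversely, given a finite-index abelian subgroup I would pass to its normal core, which is a subgroup of an abelian group hence abelian and now normal of finite index, obtain $B\iso\ZZ^n\directsum F'$ with $F'$ finite, and observe that the characteristic subgroup of $|F'|$-th powers in $B$ is isomorphic to $\ZZ^n$ and normal of finite index in $\Gamma$. For (5), Bieberbach structure theory furnishes a unique maximal finite normal subgroup $F$ with $\Gamma/F$ crystallographic, while conversely a finite extension of a crystallographic group is again virtually abelian. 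For (4), the forward direction is exactly Theorem \ref{thm:splitting1}, since vanishing of $\kappa_S$ outside a ball on a finite-index subgroup is stronger than the hypothesis $\kappa\ge 0$ used there; the converse combines \cite[Theorem 12]{NDK}, which supplies a generating set making $\kappa_S$ vanish outside a ball on a finite-index abelian subgroup, with the fact that an infinite virtually abelian group always contains an undistorted element, such as any infinite-order element of $\ZZ^n$. The undistorted-element clause confines the equivalence involving (4) to infinite groups, which is the intended regime.

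Finally, the realization directions reintroduce geometry, and this is where I expect the main work to lie. The implication from virtual abelianness to (1) is Wilking's theorem \cite[Theorem 2.1]{Wil00}, which constructs a closed manifold of non-negative sectional curvature with prescribed virtually abelian fundamental group, and this simultaneously yields (2). The delicate point is (3): the cheap implication (3)$\Rightarrow$(2) has no equally cheap converse, since Myers' theorem forces any group realized as in (3) to be finite, while conversely every finite group $F$ embeds in some $\SU(N)$ and acts freely by left translations, so that $\SU(N)/F$---carrying a bi-invariant, strictly positively Ricci-curved metric---realizes $F$ as in (3). Thus the real content, and the step I expect to be the main obstacle, is pinning down the precise scope in which (3) aligns with (1), (2), (5) and (6); more conceptually, the crux tying the discrete and Riemannian pictures together is the passage between the coarse curvature $\kappa$ and smooth Ricci curvature, mediated on one side by Theorem \ref{thm:splitting1} and on the other by \cite[Theorem 12]{NDK} and Wilking's construction.
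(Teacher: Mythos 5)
Your treatment of items (1), (2), (4), (5) and (6) matches the paper's proof in substance. The paper likewise obtains $(4)\Rightarrow(6)$ from Theorem \ref{thm:splitting1} and $(6)\Rightarrow(4)$ from \cite[Theorem 12]{NDK} (your added observation that an infinite-order element of the finite-index $\ZZ^n$ is undistorted is a necessary and correct supplement to that citation), and it delegates the equivalence of (1), (2), (5), (6) to \cite[Theorem 2.1, Theorem 2.3]{Wil00}, which is exactly the Cheeger--Gromoll/Bieberbach/Wilking package you spell out; your elementary reductions of (5) and (6) to virtual abelianness via the normal core and the subgroup of $|F'|$-th powers are fine.

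The gap is item (3). You never supply an implication from virtual abelianness (equivalently from (2)) into (3); instead you observe, correctly, that Myers' theorem forces any group satisfying (3), read literally, to be finite, and you leave ``the precise scope in which (3) aligns with the others'' as an unresolved obstacle. That is not a proof of the stated equivalence: for $\Gamma=\ZZ$, items (1), (2), (5), (6) all hold, while your own Myers argument shows the literal reading of (3) fails, so no argument along the lines you set up can close the cycle. The paper closes it purely by citation, attributing the equivalence of (3) with the rest to \cite[Theorem 2.3]{Wil00} with no further comment. To finish, you would have to locate the precise statement of that theorem and check that the proposition's phrasing of (3) transcribes it faithfully, or else record explicitly that (3) must be reinterpreted (e.g.\ with ``complete'' in place of ``closed,'' or some other hypothesis from Wilking's formulation); your objection is a legitimate red flag about the statement as written, but as a proof attempt the item is simply missing. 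A smaller omission of the same flavor, which you note only in passing: a finite group satisfies (6) with $n=0$ yet has no undistorted element, so the equivalence with (4) also tacitly requires $\Gamma$ infinite, a point on which the paper is likewise silent.
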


\begin{proof}The equivalence of (\ref{1}), (\ref{2}), (\ref{3}), (5), and (6) is obtained from \cite[Theorem 2.1, Theorem 2.3]{Wil00}. By Theorem \ref{thm:splitting1}, we have $(4)\implies (6)$. By \cite[Theorem 12]{NDK}, we have $(6)\implies (4)$.
\end{proof}

We also have the following correspondence for compact homogeneous spaces.

\begin{proposition}Let $\Gamma$ be a finitely generated group that is not virtually cyclic. The followings are equivalent.
\begin{enumerate}
\item $\Gamma$ is isomorphic to the fundamental group of a compact homogeneous space.
\item $\Gamma$ is isomorphic to the fundamental group of a homogeneous space.
\item $\Gamma$ has an undistorted element, and there is a generating set $S$ that makes the curvature $\kappa_S$ vanish everywhere outside a ball.
\item There is a finite group $F$, an integer $n$ and a short exact sequence
$$1\to F\to \Gamma\to \ZZ^n \to 1.$$
\item The center of $\Gamma$ has finite index.
\end{enumerate}
\end{proposition}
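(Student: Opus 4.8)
The plan is to prove the five conditions equivalent by treating condition (5) (that the center $Z(\Gamma)$ has finite index, i.e.\ $\Gamma$ is $\op{FZ}$) as the hub, and establishing (3)$\iff$(5), (4)$\iff$(5), and (1)$\iff$(2)$\iff$(5) separately. The implication (1)$\implies$(2) is free, so the real work splits into a curvature part, an algebraic part, and a topological part.

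For the curvature equivalence (3)$\iff$(5): to get (3)$\implies$(5) I would feed condition (3) directly into Theorem~\ref{thm:abelian}. Indeed (3) supplies an undistorted element together with a generating set $S$ for which $\kappa_S$ vanishes, hence is $\ge 0$, outside a ball; Theorem~\ref{thm:abelian} then concludes that $\Gamma$ is either $\op{FZ}$ or virtually cyclic, and the standing hypothesis that $\Gamma$ is \emph{not} virtually cyclic forces the $\op{FZ}$ alternative. This is the one place the hypothesis is essential: for instance $D_\infty$ satisfies (3) by Theorem~\ref{thm:FZconverse} but has trivial center. For (5)$\implies$(3), Theorem~\ref{thm:FZconverse} immediately produces a generating set with $\kappa=0$ outside $B(3)$; and since $\Gamma$ is $\op{FZ}$ but not virtually cyclic it is infinite, so $Z(\Gamma)$ is an infinite finitely generated abelian group, and any of its infinite-order elements is undistorted because the finite-index inclusion $Z(\Gamma)\hookrightarrow\Gamma$ is a quasi-isometry and such elements have positive stable norm in $Z(\Gamma)$.

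For the algebraic equivalence (4)$\iff$(5): for (4)$\implies$(5), the homomorphism $\Gamma\to\ZZ^n$ factors through the abelianization, so $[\Gamma,\Gamma]\le F$ is finite; then for each of finitely many generators $g$ every conjugate $hgh^{-1}=[h,g]\,g$ lies in the finite coset $[\Gamma,\Gamma]\,g$, whence $Z_\Gamma(g)$ has finite index and so does $Z(\Gamma)=\bigcap_g Z_\Gamma(g)$ (the same mechanism used at the end of the proof of Theorem~\ref{thm:splitting1}). For (5)$\implies$(4), Schur's theorem gives that $[\Gamma,\Gamma]$ is finite; I would then let $F$ be the preimage in $\Gamma$ of the (finite) torsion subgroup of the finitely generated abelian group $\Gamma/[\Gamma,\Gamma]$, so that $F$ is finite and normal with $\Gamma/F\cong\ZZ^n$, producing the sequence $1\to F\to\Gamma\to\ZZ^n\to 1$.

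Finally, the topological equivalences (1)$\iff$(2)$\iff$(5) I would obtain from the Lie-theoretic classification of fundamental groups of (compact) homogeneous spaces in \cite[Theorem 2.3]{Wil00}, exactly as the preceding proposition invokes it: these fundamental groups are precisely the finitely generated centre-by-finite groups. Heuristically, for a homogeneous space $G/H$ with $G$ connected the fibration exact sequence expresses $\pi_1(G/H)$ in terms of the finitely generated abelian group $\pi_1(G)$ and the finite group $\pi_0(H)$, forcing the center to have finite index, while the converse realizes each such group on a compact homogeneous space. I expect this topological direction to be the main obstacle: it is the only step not reducible to the curvature machinery already developed in the paper, relying instead on external structural results about Lie groups, so the care lies in citing the correct statement and confirming its compatibility with the non-virtually-cyclic hypothesis.
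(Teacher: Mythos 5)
Your proposal is correct, and its core is the same as the paper's: the paper also deduces (3)$\iff$(5) from Theorem~\ref{thm:abelian} (applied with $\kappa_S=0\ge 0$ outside a ball, using the non-virtually-cyclic hypothesis to kill the second alternative) and Theorem~\ref{thm:FZconverse}, and also delegates the topological conditions to Wilking. The differences are modest but worth noting. First, the paper obtains the equivalence of (1), (2), (4), and (5) in one stroke by citing \cite[Theorem 2.2]{Wil00} (you cite Theorem 2.3, which is the reference used for the \emph{previous} proposition on non-negative Ricci curvature; the compact homogeneous space statement is Theorem 2.2, so adjust the citation), whereas you prove (4)$\iff$(5) by hand: the factorization through the abelianization plus finiteness of conjugacy classes of generators for one direction, and Schur's theorem plus the torsion subgroup of $\Gamma/[\Gamma,\Gamma]$ for the other. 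That self-contained argument is correct and arguably preferable, since it isolates exactly which link genuinely needs Lie theory, namely (1)$\iff$(2)$\iff$(5). Second, you supply a detail the paper leaves implicit in (5)$\implies$(3): the existence of an undistorted element, which you correctly extract from an infinite-order central element via the finite-index quasi-isometry $Z(\Gamma)\hookrightarrow\Gamma$. No gaps.
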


\begin{proof}The equivalence of ({1}), ({2}), (4), and ({5}) is obtained from \cite[Theorem 2.2]{Wil00}. Since $\Gamma$ is finitely generated and is not virtually cyclic, by Theorem \ref{thm:abelian} and Theorem \ref{thm:FZconverse}, we have (3) is equivalent to (5).
\end{proof}

\section{Growth}\label{sec:growth}

In this section we turn to the questions concerning the growth of groups. For Riemannian manifolds, the Bishop-Gromov inequality indicates close relations between the volume growth of balls and the Ricci curvature. In particular, if an $n$-dimensional manifold has $\op{Ric}\geq -k(n-1)$, then the volume of the balls of radius $r$ is bounded above by that in the same dimensional model space of constant sectional curvature $-k$. 

In the last section of \cite{NDK}, Bar-Natan, Duchin, and Kropholler asked if having negative curvature outside a ball would imply the exponential growth of the group. We answer this affirmatively by the following theorem.

\begin{theorem}
	Let $\Gamma$ be an infinite finitely generated group. If there exists a normal subgroup $N$ satisfying $\kappa(x)<0$ for all  $x\in N\backslash B(R)$ for some $R>0$, then $\Gamma$ has exponential growth.
\end{theorem}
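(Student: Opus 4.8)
The plan is to work with the conjugation-orbit decomposition $N=N_0\cup X_N$ introduced in Section~\ref{sec:preliminary}, and to manufacture exponential growth out of a single \emph{infinite} $\Gamma$-conjugacy class sitting inside $N$. Throughout I may assume $N$ is infinite, which is the case of interest: for finite $N$ the hypothesis constrains only finitely many elements and cannot by itself force growth.

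\emph{Step 1: produce an infinite conjugacy class inside $N$.} I would first rule out $X_N=\varnothing$. If every $y\in N$ had finite $\Gamma$-conjugacy class, then for each fixed $s\in S$ the map $z\mapsto szs^{-1}$ permutes the finite set $[y]$, so $\sum_{z\in[y]}|szs^{-1}|=\sum_{z\in[y]}|z|$, whence $\sum_{z\in[y]}\kappa(z)=0$. If moreover $y\notin B(R)$ then $\kappa(y)<0$, so the vanishing of the sum forces some $z\in[y]$ with $\kappa(z)>0$; by hypothesis such $z$ must lie in $B(R)$. Thus every element of $N$ outside $B(R)$ is conjugate into the finite set $N\cap B(R)$, so $N\setminus B(R)\subseteq\bigcup_{z\in N\cap B(R)}[z]$ is a finite union of finite classes, making $N$ finite --- a contradiction. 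Hence some $x\in N$ has $[x]$ infinite, and since $\Gamma$ is finitely generated $[x]$ is unbounded.

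\emph{Step 2: a reverse annulus estimate.} The heart of the argument runs the summation of Proposition~\ref{prop:finite}(2) in the opposite direction, now restricted to the single class $[x]$ (note $[x]\subseteq N$ by normality). Writing $\sigma(n)=|[x]\cap S(n)|$ and summing $\kappa$ over $[x]\cap(B(r)\setminus B(R))$, the involution $(s,z)\mapsto(s^{-1},szs^{-1})$ cancels all contributions except those at the two bounding spheres, where each surviving term $\Delta(s,z)=|z|-|szs^{-1}|$ satisfies $|\Delta|\le2$; dropping the inner boundary (which carries the favorable sign) leaves the lower bound $\sum\kappa\ge-2(\sigma(r-1)+\sigma(r))$. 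On the other hand each $z\in[x]$ with $|z|>R$ has $\kappa(z)<0$, hence $\kappa(z)\le-1/|S|$ since $|S|\kappa(z)\in\ZZ$, giving $\sum\kappa\le-\tfrac1{|S|}\,|[x]\cap(B(r)\setminus B(R))|$. Comparing the two yields
$$|[x]\cap(B(r)\setminus B(R))|\ \le\ 2|S|\bigl(\sigma(r-1)+\sigma(r)\bigr).$$
Setting $F(r)=|[x]\cap(B(r)\setminus B(R))|=\sum_{R<n\le r}\sigma(n)$ and using $\sigma(r)+\sigma(r-1)=F(r)-F(r-2)$, this rearranges to $F(r)\ge\tfrac{2|S|}{2|S|-1}F(r-2)$. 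As $[x]$ is unbounded, $F(r)\to\infty$, so $F(r_0)\ge1$ for some $r_0$, and iterating gives $F(r_0+2k)\ge q^k$ with $q=\tfrac{2|S|}{2|S|-1}>1$. Since $F(r)\le|[x]\cap B(r)|\le|B(r)|$ and $|B(\cdot)|$ is nondecreasing, this forces $|B(n)|\ge a^n$ for some $a>1$, i.e.\ exponential growth.

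I expect the main obstacle to be the bookkeeping in Step~2: making the cancellation precise, verifying that only boundary pairs survive, and checking that the outer boundary alone furnishes the lower bound while the inner boundary may be discarded with the correct sign. A secondary delicate point is Step~1, where one must confirm that the vanishing of $\sum_{z\in[y]}\kappa(z)$ over \emph{every} finite class, together with strict negativity outside $B(R)$, genuinely traps $N$ in a finite set and thereby guarantees the infinite conjugacy class that drives the growth.
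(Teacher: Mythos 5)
Your proof is correct, and the engine is recognizably the paper's: a telescoping curvature sum over an annulus, with the involution $(s,z)\mapsto(s^{-1},szs^{-1})$ cancelling every interior term, a $-2$ bound on each surviving outer-boundary term, and the integrality observation $\kappa(z)<0\Rightarrow\kappa(z)\le-1/|S|$ to convert the sum into a count. The genuine difference is the set you sum over. The paper sums over the full annulus $A_N(r_1,r_2)=B_N(r_2)\setminus B_N(r_1)$ in $N$ and immediately obtains the recursion $|B_N(n)|-|B_N(R)|\ge\bigl(1+\tfrac{1}{2|S|}\bigr)\bigl(|B_N(n-2)|-|B_N(R)|\bigr)$, with no reference to conjugacy classes; you instead localize to a single conjugacy class $[x]\subseteq N$, which forces your Step 1 (producing an infinite class). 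That step is sound: conjugation by a fixed $s$ permutes any finite class $[y]$, so $\sum_{z\in[y]}\kappa(z)=0$, and strict negativity outside $B(R)$ then traps all finite classes of $N$ in finitely many classes meeting $B(R)$ --- this is the same cancellation in a cleaner, classwise form. Your Step 2 bookkeeping checks out: the inner-boundary terms have $\Delta(s,z)\ge|z|-R>0$ and may be dropped when bounding from below, and $\sigma(r)+\sigma(r-1)=F(r)-F(r-2)$ turns the comparison into the ratio recursion $F(r)\ge\tfrac{2|S|}{2|S|-1}F(r-2)$. What the extra work buys you is a mildly stronger conclusion: a single conjugacy class in $N$ already grows exponentially, not merely $N$. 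One shared caveat: as literally stated the theorem is vacuous (and then false, e.g.\ $N=\{1\}$ in $\ZZ$) whenever $N\subseteq B(R)$; you make the needed hypothesis that $N$ is infinite explicit, while the paper's recursion likewise only yields growth once $|B_N(n_0)|>|B_N(R)|$ for some $n_0$, so this is a defect of the statement rather than of either argument.
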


\begin{proof}
	We use a similar technique as in the proof of $(2)$ of Proposition \ref{prop:finite}. Let $r_1$, $r_2$ be two arbitrary integers satisfying $R\leq r_1<r_2-4$, Equation \ref{eq:sum-simplified} still holds:
	$$\sum_{x\in A_N(r_1,r_2)}\kappa(x)=\frac{1}{|S|}\sum_{(s,x)\in Y_1\cup Y_2}\Delta(s,x).$$
	Since $Y_1$ contributes positively in the sum, we can bound the sum from below,
	$$\sum_{x\in A_N(r_1,r_2)}\kappa(x)\geq \frac{1}{|S|}\sum_{(s,x)\in Y_2}\Delta(s,x)\geq -2(|S_N(r_2)|+|S_N(r_2+1)|).$$
	We set $r_1=R$. By the definition of the curvature, $\kappa(x)<0$ implies $\kappa(x)\leq -1/|S|$. Hence,
	\begin{equation}\label{eq:sphere-bound-ball}
|S_N(r_2)|+|S_N(r_2+1)|\geq \frac{1}{2}\sum_{x\in A_N(R,r_2)}\frac{1}{|S|}=\frac{1}{2|S|}|A_N(R,r_2)|,
	\end{equation}
	for all $r_2>R+4$. 
	Use $|B_N(n)|=|B_N(n-2)|+|S_N(n-1)|+|S_N(n)|$, we obtain from inequality \ref{eq:sphere-bound-ball} that
	\begin{equation*}
		\begin{split}
	|B_N(n)|&\geq |B_N(n-2)|+\frac{1}{2|S|}|A_N(R,n-1)|\\ &=|B_N(n-2)|+\frac{1}{2|S|}|B_N(n-1)|-\frac{1}{2|S|}|B_N(R)|\\
	&\geq (1+\frac{1}{2|S|})|B_N(n-2)|-\frac{1}{2|S|}|B_N(R)|.\\
	\end{split}
	\end{equation*}
So we have
\begin{align*}
|B_N(n)|-|B_N(R)|\geq (1+\frac{1}{2|S|})\Big(|B_N(n-2)|-|B_N(R)|\Big).
\end{align*}	
Iterate the above inequality, we obtain
    $$|B_N(n)|\geq C_1\Bigg(\sqrt{1+\frac{1}{2|S|}}\Bigg)^n+C_2$$
    for some $C_1, C_2>0$ which does not depend on $n$. This implies $\Gamma$ has exponential growth.
\end{proof}

\begin{remark}
	In the Riemannian setting, the condition $\op{Ric}<0$ is quite flexible. In fact, every smooth manifold with dimension $\geq 3$ admits a negative Ricci metric \cite{Lohk94}.
\end{remark}

Using Lemma \ref{lem:subgroup-to-normal}, we get the following corollary.

\begin{corollary}
    Given a finitely generated group $\Gamma$, if there exists a finite index subgroup $\Gamma'$ satisfying $\kappa(x)<0$ for all but finitely many $x\in \Gamma'$, then $\Gamma$ has exponential growth.
\end{corollary}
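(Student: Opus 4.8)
The plan is to deduce the statement directly from Theorem~\ref{thm:growth} by replacing the given finite index subgroup with a normal one. First I would invoke Lemma~\ref{lem:subgroup-to-normal} applied to the pair $\Gamma' < \Gamma$ to obtain a finite index \emph{normal} subgroup $N \triangleleft \Gamma$ with $N \subseteq \Gamma'$. The inclusion $N \subseteq \Gamma'$ lets the curvature hypothesis pass to $N$ for free: since $\kappa(x) < 0$ holds for all but finitely many $x \in \Gamma'$, the set of $x \in N$ with $\kappa(x) \geq 0$ is contained in the exceptional set for $\Gamma'$, hence is itself finite.

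The second step is merely a change of formulation. A finite subset of $N$ is bounded in the word metric, so it lies inside some ball $B(R)$; therefore $\kappa(x) < 0$ for every $x \in N \setminus B(R)$. This is precisely the hypothesis of Theorem~\ref{thm:growth} applied to the normal subgroup $N$, and so the conclusion that $\Gamma$ has exponential growth follows at once.

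I do not expect any genuine obstacle here, as the corollary is essentially a repackaging of Theorem~\ref{thm:growth} through the normal-core construction. The only points deserving a word of care are that one should regard $\Gamma$ as infinite (if $\Gamma$ is finite the hypothesis is degenerate, forcing $N$ to be finite, and there is nothing of substance to prove), and that the curvature $\kappa$ and the ball $B(R)$ used for $N$ are those induced by the fixed ambient generating set $S$ of $\Gamma$, so that the object $B(R)$ appearing in the restriction step and in Theorem~\ref{thm:growth} is the same. Both are routine.
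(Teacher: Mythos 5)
Your proposal is correct and matches the paper's argument exactly: the paper also derives the corollary by applying Lemma~\ref{lem:subgroup-to-normal} to pass to a finite index normal subgroup $N \subseteq \Gamma'$, noting the curvature hypothesis restricts to $N$, and then invoking Theorem~\ref{thm:growth}. Your additional remarks about the finite exceptional set lying in some ball $B(R)$ and the generating set being fixed are the right routine checks and require no further justification.
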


\section{Further discussions}\label{sec:discuss}
We have seen that a group must have exponential growth if it admits a generating set so that the curvature is negative everywhere except finitely many elements. On the other hand, if a group admits a generating set that makes curvature positive everywhere except finitely many elements, then it is finite \cite[Theorem 26]{NDK}. So it is natural to ask the same question in the context of non-negative curvature.
\begin{question} What is the upper bound of the growth of a group with non-negative curvature everywhere except for finitely many elements?
\end{question}

This question can be treated as an analog for Bishop-Gromov inequality. If the answer for this question is polynomial growth, by Gromov's polynomial growth theorem \cite{Gromov81}, then such a group is virtually nilpotent. As a corollary, we can remove the condition about the existence of an undistorted element in both Theorem \ref{thm:splitting1} and Theorem \ref{thm:abelian}. We remark that the existence of a bi-infinite geodesic is essential in Cheeger-Gromoll's splitting theorem, but for the universal cover of a closed Riemannian manifold, it is automatic. Thus, we can ask,

\begin{question} Is it possible to remove the assumption on the existence of an undistorted element in both Theorem \ref{thm:splitting1} and Theorem \ref{thm:abelian}?
\end{question}

Another natural question to ask is the relation between this notion of negative curvature and $\delta$-hyperbolicity or $CAT(0)$ property for groups. This has already been asked in \cite{NDK}. Examples of $\delta$-hyperbolic groups which have infinitely many elements of positive curvature, were constructed in \cite{NDK}. By Corollary \ref{cor:hyp}, we also know that they have infinitely many elements of negative curvature. However, all these examples have torsion. We do not know if there is an example that is torsion free. In particular, we can ask,
\begin{question}\label{que:free}Is it true that for every generating set, the free group $F_2$ has negative curvature everywhere except finitely many elements?
\end{question}
We note that a similar argument in \cite[Example 15]{Oll09} shows that, for every non-elementary $\delta$-hyperbolic group, there is a choice of a generating set such that the group has only finitely many elements of non-negative Ollivier's curvature, and thus of the curvature we consider.

\begin{remark}
By the time this paper is revised, Question \ref{que:free} has been answered by Andrew Keisling \cite[Theorem 1]{Keisling21}. Namely, there exists a generating set of $F_2$ such that the set of zero curvature elements has positive density.
\end{remark}

\end{document}